\newcommand{\R}{\mathbb{R}}
\newcommand{\Z}{\mathbb{Z}}
\newcommand{\N}{\mathbb{N}}
\DeclareMathOperator{\II}{II}
\DeclareMathOperator{\Hess}{Hess}
\DeclareMathOperator{\Ind}{Ind}
\newtheorem{theorem}{Theorem}[section]
\newtheorem{proposition}{Proposition}[section]
\newtheorem{lemma}{Lemma}[section]
\newtheorem{remark}{Remark}[section]
\newtheorem{definition}{Definition}[section]
\newenvironment{proof}[1][Proof]{\textbf{#1.} }{\ \rule{0.5em}{0.5em}}
\begin{document}
\title{On an Arnold's Conjecture Concerning the Space of Hyperbolic Homogeneous Polynomials}

\author{Vinicio A. Gómez-Gutiérrez\thanks{Facultad de Ciencias, Universidad 
		Nacional Aut\'onoma de M\'exico, C.U.,
		 CDMX 04510, M\'exico. \newline e-mail: vgomez@ciencias.unam.mx} \ 
	and Adriana Ortiz-Rodr\'iguez\thanks{Instituto de Matem\'aticas, Universidad 
		Nacional Aut\'onoma de M\'exico. C.U., CDMX 04510, M\'exico. \newline 
e-mail: aortiz@matem.unam.mx} 
}
\date{}
\maketitle

\hfill{{\it To the memory of V. I. Arnold}\quad}

\begin{abstract}
\noindent The set of homogeneous polynomials of degree $D$ is a topological space that contains  
the subset $Hyp(D)$ constituted only by hyperbolic polynomials. In 2002, V. I. Arnold conjectured
in \cite{arn0} that the number of connected components of $Hyp (D)$ increases, as $D$ increases, at 
least as a linear function of $D$. In this paper we prove that this conjecture is true. We determine the 
exact number of connected components of $Hyp (D)$ and we provide a representative for each component.  
The proof is constructive; our approach uses homotopy invariance of the index of a curve and properties of 
homogeneous polynomials. We also describe some geometrical properties of the hyperbolic 
polynomials that we provide.
\end{abstract}\medskip

\noindent {\small{\bf Keywords}: Hyperbolic homogeneous polynomials; Hessian topology; Real Quadratic Forms; Asymptotic fields of lines.}

\noindent {\small {\bf Mathematics Subject classification}: 53A05, 26C05, 54C30}

\vspace{0.1in}

\section{Introduction}

Let us consider  $H_D[x,y]$ as the set of homogeneous polynomials 
$f(x,y)=a_0 x^D + a_1 x^{D-1}y +\cdots + a_D y^D$ of degree $D \geq 2$ in $\R[x,y]$.
We say that $f$ is hyperbolic if its Hessian polynomial $\Hess f := f_{xx}f_{yy}-f_{xy}^2$ is negative at any point in the $xy$-plane other than the origin. Let $Hyp(D)$ be the set constituted by all hyperbolic homogeneous  polynomials of degree $D$ in $\R[x,y]$.
The topological properties of $Hyp(D)$ have been studied as a part of the topic named by V. I. Arnold as Hessian Topology \cite{arn0}, \cite[problems 2000-1, 2000-2, 2001-1, 2002-1]{arn3}, which studies 
the topological properties of the parabolic curve of a smooth surface in the three-dimensional Euclidean space, see for instance \cite{segre, F, B-T, K-T, arn1, panov1, arn2}. The parabolic curve of a smooth surface $S$ is made up of the points in $S$, where the second fundamental form of $S$ is degenerate.
\medskip

In particular, the Hessian topology studies the topology of the parabolic curve when the surface is the graph of a real polynomial in two variables. This problem can be divided into two questions: one of these consists of determining the maximum number of components of the parabolic curve, and the second, in establishing the mutual distribution of the components. For instance, if the parabolic curve of a polynomial of degree five is compact, the 
maximum number of components it can have is, according to Harnack's Theorem, eleven, but no polynomial of degree five with eleven compact parabolic components is known until now. 
Papers studying the topology of the parabolic curve of the graph of a polynomial include among others: 
\cite{O-S, B-B, H-O-S, H-O-S-1, C-O, K-U}.
The behavior of the parabolic curve at infinity is determined by the leading homogeneous part of the 
polynomial defining the surface.
For instance, if the leading homogeneous part is hyperbolic or elliptic, then the parabolic curve is 
compact \cite[Theorem 3.5]{GuOr}. 
This is one of the motivations that led us to be interested in the analysis of hyperbolic homogeneous polynomials.
\medskip

The connectedness property of the subset $Hyp(D)$ was investigated in \cite{arn0} 
for $D \geq 3$; namely the author proves that $Hyp(3)$ and $Hyp(4)$ are connected sets, while  
$Hyp(D)$ is disconnected whenever $D \geq 5$. In connection with this property, Arnold states the 
following conjecture \cite[p.1067]{arn0}, \cite[p.139]{arn3}:
 
\begin{description}
\item[ ] {\it \qquad\ ``The number of connected components of the space of hyperbolic homogeneous \newline
polynomials of degree $D$ increases as $D$ increases 
 {\rm (}at least as a linear function of $D${\rm)}."\qquad \ }
\end{description}

\medskip  

We recall that the second fundamental form of a smooth surface of the form 
$z - f(x,y)=0$, where $f$ is a differentiable function defined on the $xy$-plane, at a point 
$(x,y,f(x,y))$ is defined as $\II_f (x,y) = f_{xx}(x,y) dx^2 + 2 f_{xy}(x,y) dx dy+ f_{yy}(x,y) dy^2$. 
 To each hyperbolic homogeneous polynomial $f\in \R[x,y]$, a continuous map
$\gamma_f  : [0,2\pi] \rightarrow \R^3$ from the interval $[0,2\pi]$
into the three-dimensional space of the real quadratic forms on the real plane 
(i.e., $\{ a\xi^2 + 2b \xi \eta + c\eta^2\}$) is associated \cite{arn0}. This map assigns to each $\varphi$ 
the second fundamental form of the graph of $f$ at the point $(x,y) = (\cos \,\varphi, sin \,\varphi)$. 
The index of $\gamma_f$, 
$\Ind (\gamma_f)$, is the number of revolutions that the point 
$\gamma_f (\varphi)$ makes around the cone of de\-ge\-nerated quadratic forms $\{b^2 - ac = 0\}$ 
with $\varphi \in [0,2\pi]$. \medskip

The question of when a homogeneous polynomial is hyperbolic or not, was analyzed 
in polar coordinates in \cite{arn0}. A consequence of this study \cite[Theorem 2 $\&$ Theorem 4 of \S 3]{arn0} is that for any hyperbolic homogeneous polynomial $f$:
\begin{equation}\label{indice-polares}
\Ind (\gamma_f) = 2 - \frac{1}{2} \# \{\varphi \in [0,2\pi) : F(\varphi) = 0\},
\end{equation}
where $F$ is the restriction of $f$ to the unit circle, that is, $F (\varphi) = f(\cos \,\varphi , \sin \,\varphi )$. 
Denoting by $D$ the degree of $f$ we obtain from equality (\ref{indice-polares}) that (see inequalities (\ref{cota-ind-gama})):
$$ 2 - D \leq \Ind (\gamma_f) \leq 2.$$
On the other hand, it is proved in \cite[Corollary of \S 6 $\&$ Corollary 4 of \S 7]{arn0} that when the number
$D$ is odd, $\Ind (\gamma_{f}) \leq -1$ while in the case of $D$ even, $\Ind (\gamma_{f}) \leq 0$. Moreover,
for both cases, $\Ind (\gamma_{f}) \equiv D$ mod ($2$) is fulfilled. 
From the previous results, the only allowed values for $\gamma_f$ are the real
numbers in the set $\{ 0, -2, -4 , \cdots , -(D-2) \}$ if $D$ is even and $D\geq 6$, while for
each odd number $D\geq 3,$ the allowed values are $\{ -1, -3, -5 \cdots , -(D-2) \}$. That is, 
there are exactly $\frac{D}{2}$ allowed values for the index of $\gamma_f$ in the even case 
and $\frac{D-1}{2}$ for the odd case. Later we will show that there are hyperbolic polynomials with the property that the indices of their associated curves $\gamma_f$  reach these values.

\medskip

In connection with the aforementioned conjecture, Arnold proves \cite[Theorem 1 of 
 $\S$5]{arn0} 
that the homogeneous polynomials of degree $D$:
\begin{eqnarray}\label{polis-arnold}
 f_m (x,y) = (x^2+y^2)^{\frac{D-m}{2}} \mbox{Re} (x+i y)^m 
\end{eqnarray}
are hyperbolic for every  natural number $m$ satisfying the relations
$\, m\leq D < m^2 \,$ and $\,D-m\,$ is even. He also 
obtained that the Poincar\'e index of the field of asymptotic directions of each $f_m$ at the 
origin $\Ind_0 (\II_{f_m})$ satisfies $ \Ind_0 (\II_{f_m}) = \frac{2-m}{2}$. Since 
$ \Ind_0 (\II_{f_m}) = \frac{1}{2}\Ind (\gamma_f)$ \cite[Theorem 2 of \S 2]{arn0}, then
$\Ind (\gamma_{f_m}) = 2-m$.  Taking into account 
the notation $P_m (x,y) := 
\mbox{Re} (x+i y)^m$ and $Q_{D-m} (x,y) :=  (x^2+y^2)^{\frac{D-m}{2}}$, we show in Table 
\ref{tabla-1} the values ($f$, $\Ind (\gamma_f)),$ where $f = P_m Q_{D-m}$ is a  hyperbolic polynomial 
 in the family (\ref{polis-arnold}) of degree $D$ whenever $3 \leq D\leq 16$, and $\Ind (\gamma_f)$ is the
index of the curve $\gamma_f$. When $f = P_m Q_0,$ we will omit $Q_0$.

\medskip 
\begin{table}[h!] 
\begin{center}{\small
\begin{tabular}{|l| l l l l l  l |}\hline 
$D$  & ($f$, $\Ind (\gamma_f))$& & & & & \\ \hline
2  & $(P_2, 0)$ & & & & &\\ \hline
3  & $(P_3, -1)$ & & & & &\\ \hline
4  & $(P_4, -2)$ & & & & &\\ \hline
5  & $(P_5, -3)$ & $(P_3 Q_2, -1)$ & & & & \\ \hline
6  & $(P_6, -4)$ & $(P_4 Q_2, -2)$ & & & & \\ \hline
7  & $(P_7, -5)$ & $(P_5 Q_2, -3)$ & $(P_3 Q_4, -1)$ & & & \\ \hline
8  & $(P_8, -6)$ & $(P_6 Q_2, -4)$ & $(P_4 Q_4, -2)$ & & & \\ \hline
9  & $(P_9, -7)$ & $(P_7 Q_2, -5)$ & $(P_5 Q_4, -3)$ & & &\\ \hline
10  & $(P_{10}, -8)$ & $(P_8 Q_2, -6)$ & $(P_6 Q_4, -4)$ & $(P_4 Q_6, -2)$ & &\\ \hline
11  & $(P_{11}, -9)$ & $(P_9 Q_2, -7)$ & $(P_7 Q_4, -5)$ & $(P_5 Q_6, -3)$ & &\\ \hline
12  & $(P_{12}, -10)$ & $(P_{10} Q_2, -8)$ & $(P_8 Q_4, -6)$ & $(P_6 Q_6, -4)$ & 
$(P_4 Q_8, -2)$ &\\ \hline
13  & $(P_{13}, -11)$ & $(P_{11} Q_2, -9)$ & $(P_9 Q_4, -7)$ & $(P_7 Q_6, -5)$ &
$(P_5 Q_8, -3)$ &\\ \hline
14  & $(P_{14}, -12)$ & $(P_{12} Q_2, -10)$ & $(P_{10} Q_4, -8)$ & $(P_8 Q_6, -6)$ &
$(P_6 Q_8, -4)$ & $(P_4 Q_{10}, -2)$ \\ \hline
15  & $(P_{15}, -13)$ & $(P_{13} Q_2, -11)$ &  $(P_{11} Q_4, -9)$ &  $(P_9 Q_6, -7)$ &
$(P_7 Q_8, -5)$ & $(P_5 Q_{10}, -3)$\\ \hline
16  & $(P_{16}, -14)$  & $(P_{14} Q_2, -12)$  & $(P_{12} Q_4, -10)$  & $(P_{10} Q_6, -8)$  
&  $(P_8 Q_8, -6)$  & $(P_6 Q_{10}, -4)$ \\ \hline
\end{tabular}\caption{Hyperbolic polynomials of degree $D$ in the Arnold family 
(\ref{polis-arnold}).} \label{tabla-1}}
\end{center}
\end{table}

We remark from Table \ref{tabla-1} that for any odd number $D$ such that $D\geq 9$, 
there is no polynomial $f$ of degree $D$ in the Arnold family 
with the property $\Ind (\gamma_f) = -1$. In an analogous way, if $D\geq 25$
the values $ \{-1, -3\}$ are not reached as indices of $\gamma_f$ for any polynomial $f$ in the family
(\ref{polis-arnold}). More  generally,  for each 
natural number $k\geq 1$ and each odd number $D$ satisfying $D\geq (2k+1)^2$, the $k$ 
values that are not reached as indices of $\gamma_f$ by any polynomial $f$ of degree $D$ in the family (\ref{polis-arnold}) are $\{-1, -3, \ldots ,-(2k-1)\}$.

\medskip

In the case that $D$ is even, 
we remark that for $D\geq 6,$ the value $0$ is not 
achieved as the index of $\gamma_{f}$ by any polynomial of degree $D$ belonging to the Arnold family.
On the other hand, no polynomial $f$ of the family (\ref{polis-arnold}) satisfies that its associated 
curve $\gamma_f$ has index $-2$ if $D \geq 16$. In a similar way to the odd case, we conclude that for 
each natural number $k\geq 2$ and each even number $D$ fulfilling  $D\geq (2k)^2$, 
the $k$ values that are not reached as indices of $\gamma_f$ by any polynomial 
$f$ of degree $D$ belonging to the family (\ref{polis-arnold}) are $\{0, -2, \ldots ,-(2k-2)\}$.
\medskip

In this paper we prove Arnold's conjecture, Theorem \ref{teo-conjetura}. On the one hand,
we exhibit a family $\mathcal{F}$ of hyperbolic polynomials, different from Arnold's 
family, such that for each odd number $D\geq3$ there are $\frac{D-1}{2}$ hyperbolic polynomials $f \in
\mathcal{F}$ with the property that their associated hyperbolic curves $\gamma_f$ have different
values, and for each even number $D$ there are $\frac{D}{2}$ hyperbolic polynomials 
in $\mathcal{F}$ with the same property. 
In both cases, indices of curves $\gamma_f$ for polynomials $f$ in our family attain all the possible 
values that $\gamma_f$ could take.
\medskip

On the other hand, we prove in Theorem \ref{poli-homotopos} that two hyperbolic polynomials $f, g \in H_D[x,y]$ of degree $D$ are in the same connected 
component of $Hyp(D)$ if and only if the indices of their corresponding hyperbolic curves 
$\gamma_f$ and $\gamma_g$ are equal. From this theorem, it follows that $Hyp(D)$ has exactly 
$\frac{D-1}{2}$ connected components for any odd $D\geq 3$, there are exactly $\frac{D}{2}$ for $D\geq 6$ even, and in both
cases each of these connected components contains a polynomial of our family.

\medskip In subsection 2.2, we exhibit some geometrical properties of the hyperbolic 
polynomials of the family $\mathcal{F}$.


\section{Results and Proofs}
\subsection{Arnold's Conjecture}
The set of real homogeneous polynomials of degree $D$ in two variables,  $H_D[x,y]$, is a space 
homeomorphic to $\R^{D+1}\setminus \{0\}$. We consider this set equipped with the induced 
topology of the Euclidean space $\R^{D+1}$. 
This topological space contains {\it hyperbolic polynomials} $f$ defined by the condition that their hessian polynomial $\Hess f := f_{xx}f_{yy}-f_{xy}^2$ is negative in $\R^2\setminus \{(0,0)\}$.
We are interested in studying the topology of the subset $Hyp (D) \subset H_D[x,y]$, which consists only 
of hyperbolic polynomials. Particularly, we will focus on the study of the quantitative analysis of the 
connected components of $Hyp (D)$. 

\begin{theorem}[Arnold's conjecture]\label{teo-conjetura}
The subset $Hyp (D)$ has exactly $\frac{D-1}{2}$ connected components if $D$ 
is odd; while for each even $D\geq 6$, it has exactly $\frac{D}{2}$
connected components.
\end{theorem}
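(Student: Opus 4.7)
The plan is to combine two ingredients announced in the introduction. First, Theorem~\ref{poli-homotopos} asserts that two hyperbolic polynomials of degree $D$ lie in the same connected component of $Hyp(D)$ if and only if their associated curves have equal indices, so connected components of $Hyp(D)$ are in bijection with the set of indices actually attained on $Hyp(D)$. Second, the introduction has already enumerated the \emph{admissible} index values $\{-1,-3,\ldots,-(D-2)\}$ for odd $D$ and $\{0,-2,\ldots,-(D-2)\}$ for even $D$, with cardinalities $\frac{D-1}{2}$ and $\frac{D}{2}$ respectively, and any attained index must lie in this set. It therefore suffices to exhibit, for each admissible value, at least one hyperbolic polynomial realizing it, i.e.\ a family $\mathcal{F}\subset Hyp(D)$ of the corresponding cardinality realizing all admissible indices.

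Before building $\mathcal{F}$, I would briefly justify the easy direction of Theorem~\ref{poli-homotopos}: the map $f \mapsto \gamma_f$ is continuous, and for $f\in Hyp(D)$ the curve $\gamma_f$ avoids the degenerate cone $\{b^2 - ac = 0\}$, so $\Ind(\gamma_f)$ is a locally constant $\mathbb{Z}$-valued function on $Hyp(D)$, hence constant on each component. The converse, which is the genuine content of Theorem~\ref{poli-homotopos}, I would leave to the separate proof of that theorem.

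For the construction of $\mathcal{F}$, formula (\ref{indice-polares}) translates ``$\Ind(\gamma_f) = 2 - r$'' into ``$F(\varphi) = f(\cos\varphi,\sin\varphi)$ has exactly $2r$ zeros on $[0,2\pi)$'', which in turn (for hyperbolic $f$, whose real linear factors must all be simple, since a repeated real factor would force $\det\Hess(f)$ to vanish along a line) says $f$ has exactly $r$ distinct real linear factors. A natural template is
$$ f_r(x,y) \;=\; L_1(x,y)\cdots L_r(x,y)\cdot Q(x,y)^{(D-r)/2}, $$
with $L_1,\ldots,L_r$ distinct real linear forms and $Q$ a positive-definite quadratic form. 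Arnold's choice uses the evenly-spaced Chebyshev arrangement $\mathrm{Re}(x+iy)^r$ with $Q = x^2+y^2$, but its hyperbolicity requires $D < r^2$ and so covers only a subset of the admissible $r$. I would cover the rest by breaking symmetry: placing the slopes of the $L_i$ with carefully chosen unequal spacings, and if necessary replacing $Q$ by a more general positive-definite quadratic, then verifying $\det\Hess(f_r) < 0$ directly in polar coordinates, where the condition becomes a differential inequality in $F$, $F'$, and $F''$.

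The main obstacle is precisely this hyperbolicity verification outside the Arnold range $D < r^2$: Arnold's proof relies on the rotational symmetry of $\mathrm{Re}(x+iy)^r$, and once that symmetry is broken one must control $\det\Hess(f_r)$ by hand for every admissible value of $r$. A secondary, independent obstacle is the hard direction of Theorem~\ref{poli-homotopos}, namely actually connecting two hyperbolic polynomials of equal index by a path in $Hyp(D)$; a natural strategy is to deform an arbitrary $f$ of index $2-r$ first to the product form above (splitting off positive-definite factors one at a time) and then to the chosen canonical $f_r\in \mathcal{F}$, keeping the hessian strictly negative throughout. Once both ingredients are in place, the count of components follows at once from the bijection supplied by Theorem~\ref{poli-homotopos} together with the enumeration of admissible indices.
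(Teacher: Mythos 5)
Your three-step framework --- (a) show components are classified by $\Ind(\gamma_f)$ via Theorem~\ref{poli-homotopos}, (b) bound the admissible indices, (c) realize every admissible index by an explicit family --- is exactly the decomposition the paper uses, and your easy-direction argument for Theorem~\ref{poli-homotopos} (local constancy of the winding number) is fine. The problem is that the two things you label as ``main obstacles'' are in fact the entire mathematical content of the theorem, and the ways you propose to address them do not work as stated and differ from what the paper actually does.

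For step (c), your template $f_r = L_1\cdots L_r\, Q^{(D-r)/2}$ with $Q$ a positive-definite \emph{quadratic} cannot reach all admissible indices, no matter how the slopes of the $L_i$ are arranged and no matter which positive-definite $Q$ you take. Any positive-definite quadratic is $GL_2(\R)$-equivalent to $x^2+y^2$, and hyperbolicity is $GL_2(\R)$-invariant, so generalizing the quadratic gains nothing; you are back to the Arnold family $(x^2+y^2)^{(D-r)/2}\cdot(\text{product of linear forms})$, whose hyperbolicity breaks down precisely for $D\geq r^2$ --- that failure is what motivates the conjecture and is spelled out in Table~\ref{tabla-1}. The paper sidesteps this by using a positive-definite factor of genuinely higher degree, namely $x^{2n}+y^{2n}$ (which is \emph{not} a power of a quadratic for $n\geq 2$), giving the family $f_{2k+1}=x(x^{2n}+y^{2n})\prod_{i=1}^k(x^2-i^2y^2)$ and its even-degree companions in Theorem~\ref{teo-pol-hiper-impares}. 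Hyperbolicity is then proved by induction on the number of real linear factors via the key Lemma~\ref{lema-uno-mas-lineal}: if $f$ is $D$-hyperbolic and $\ell$ is linear, then $\ell f$ is $(D+1)$-hyperbolic iff $\ell\nmid(\ell_x f_y-\ell_y f_x)$, which follows from the identity $\Hess(\ell f)=\tfrac{D+1}{D-1}\ell^2\Hess f-(\ell_x f_y-\ell_y f_x)^2$. That lemma, and the choice of $x^{2n}+y^{2n}$ over $(x^2+y^2)^n$, are the crucial ideas your plan is missing; ``verify $\det\Hess(f_r)<0$ directly in polar coordinates'' does not engage with the actual difficulty.

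For the hard direction of Theorem~\ref{poli-homotopos}, your proposal (``split off positive-definite factors one at a time'' to reach a canonical product form while staying in $Hyp(D)$) is neither justified nor what the paper does, and it would need its own nontrivial argument --- it is not clear that a hyperbolic polynomial can be continuously deformed inside $Hyp(D)$ into a product of linear forms and a positive-definite factor. The paper instead passes to the curve $\alpha_f(\varphi)=(F,F',F'')$, which by Arnold's criterion (inequality~(\ref{hyper-cond})) lies in an open region of $\R^3$ homotopy equivalent to a circle; equality of indices gives a free homotopy of $\alpha_f$ to $\alpha_g$ inside that region, and projecting onto the first coordinate produces the path of polynomials. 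So while the overall architecture of your proposal matches, both of the substantive steps are left as acknowledged gaps, and the methods you sketch for filling them either recreate the obstruction the theorem is meant to overcome or require an unproved deformation lemma.
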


\noindent\begin{proof}
The proof is divided into three steps. The first one consists of verifying that two polynomials $f,g \in
Hyp (D)$ are in the same connected component if and only if their associated curves $\gamma_f, \gamma_g$
(which will be defined below) have equal indices. \medskip

In the second step we justify that $Hyp(D)$ has at most $\frac{D-1}{2}$ connected components if $D$ 
is odd; and $\frac{D}{2}$ whenever $D$ is even.\medskip

The third step is to construct, for each odd number $D$, $\frac{D-1}{2}$ hyperbolic 
polynomials $f$ of degree $D$ with the property that their hyperbolic curves $\gamma_f$ have different 
indices. Therefore, according to the first step, these hyperbolic polynomials will be in different connected 
components of $Hyp(D)$; and by the second step, $Hyp(D)$ will have exactly $\frac{D-1}{2}$ connected 
components.
Carrying out an analogous reasoning for each even number $D$, we construct $\frac{D}{2}$ hyperbolic 
polynomials of degree $D$ such that their hyperbolic curves have different indices. \medskip

In order to be more general, we decided to isolate the results, developed at each step, from their original 
context. 
\end{proof}  
\medskip

\noindent {\bf First step.}
\begin{definition}
{\rm Let $\gamma : [0,2\pi] \rightarrow \mathcal{Q}$ be a continuous curve from the interval 
$[0,2\pi]$ into the three-dimensional space of real quadratic forms on the plane $\mathcal{Q} =
\{ a\psi^2 + 2 b\psi \eta + c\eta^2 : a,b,c \in \R\}$.  We say that} $\gamma$ is 
hyperbolic if for each $\varphi \in [0, 2\pi]$ {\rm the quadratic form $\gamma 
(\varphi)$ has the 
signature (+,$-$), that is, if 
$$ a (\varphi) c (\varphi) - b^2 (\varphi) < 0.$$ }
\end{definition}

Let $\gamma : [0,2\pi] \rightarrow \mathcal{Q}$ be a hyperbolic curve. We define {\it  the index of  $\gamma$} 
as the number of revolutions of $\gamma$ about the cone of degenerate forms $\{ ac - b^2 = 0\}$, that is,
\medskip

\centerline{$\, \Ind (\gamma) = \frac{1}{2\pi} \Delta$ arg ($a-c + 2bi$)($\varphi$)$|_0^{2\pi} \in \Z.$}
\medskip

It is well known that,  for a hyperbolic homogeneous polynomial $f$, its second fundamental form 
$\II_f (x,y) = f_{xx}(x,y) dx^2 + 2 f_{xy}(x,y) dx dy+ f_{yy}(x,y) dy^2$  defines two continuous 
line fields in $\mathbb R^{2^*} = \R^ 2\setminus \{(0,0)\}$. These line fields are obtained
by projecting the fields of asymptotic lines into the $xy$-plane under the orthogonal 
projection $\pi:\R^3 \rightarrow \R^2, (x,y,z)\mapsto (x,y)$. We identify, 
for the sake of simplicity, the zero locus of $\II_f$ with the asymptotic directions and call them 
{\it the field of asymptotic directions of $f$.} Thus, each hyperbolic homogeneous polynomial 
$f$ is associated with a closed hyperbolic continuous curve $\gamma_f : [0,2\pi] \rightarrow 
\mathcal{Q}$ such that $\varphi \mapsto \II_f (cos \,\varphi, sin \,\varphi)$, that is, 
$$\gamma_f (\varphi) := 
(f_{xx}(cos \,\varphi, sin \,\varphi),\, f_{xy}(cos \,\varphi, sin \,\varphi), \,
f_{yy}(cos \,\varphi, sin \,\varphi)).$$

In what follows we will see that the index of $\gamma_f$ plays an important role in determining 
the number of connected components of $Hyp (D)$. 

\begin{definition}
{\rm We define} a D-hyperbolic isotopy of two hyperbolic 
polynomials $f, g \in Hyp (D)$ {\rm as a continuous function $H:[0,1]\times \R^2 \rightarrow \R$ 
such that $\, H(0,x,y) = f(x,y), \, H(1,x,y) = g(x,y)$ and
for any $t \in (0,1), \, H(t, x, y)$ is a hyperbolic polynomial in $Hyp (D)$ evaluated in $(x,y)$. 
In this case, we say that $f$ and $g$ are} $D$-hyperbolic isotopic. 
\end{definition}

A D-hyperbolic isotopy of two hyperbolic polynomials $f, g \in Hyp (D)$ is essentially a continuous curve
from the interval $[0,1]$ into the subset $Hyp (D)$ that connects $f$ and $g$. 

\begin{remark}
The set $Hyp (D)$ is an open set in $\R^{D+1}$.
\end{remark}

\noindent \begin{proof}
Consider the continuous functions, $H: H_D [x,y] \rightarrow H_{2D-4}[x,y]\cup \{\bar 0\}$ such that to each
homogeneous polynomial $f$ of degree $D$ assigns its Hessian polynomial $ f_{xx}f_{yy} - f_{xy}^2$; 
and the function $M: H_{2D-4}[x,y]\cup \{\bar 0\} \rightarrow \R$ such that to each polynomial $g$ assigns
the maximum value of the restriction of $g$ to the unit circle. 
The statement follows since $Hyp (D)$ is the inverse image of the 
interval $(- \infty , 0)$ under the continuous function $M\circ H$.
\end{proof}\medskip

Since $Hyp (D)$ is an open subset of $\R^{D+1}$, 
two hyperbolic polynomials in $Hyp (D)$ are $D$-hyperbolic isotopic if and only if 
they lie in the same connected component of $Hyp (D)$.

\begin{theorem}\label{poli-homotopos}
Two hyperbolic polynomials $f, g \in Hyp (D)$ are $D$-hyperbolic isotopic
if and only if $\Ind (\gamma_f) = \Ind (\gamma_g)$.
\end{theorem}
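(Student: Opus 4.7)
The theorem is a biconditional, and I would handle each direction separately. The necessary direction follows from homotopy invariance of a winding number; the sufficient direction requires constructing an explicit isotopy and is the substantive part.

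\textbf{Necessity ($\Rightarrow$).} Given a $D$-hyperbolic isotopy $H:[0,1]\times\mathbb{R}^2\to\mathbb{R}$ from $f$ to $g$, set $h_t(x,y)=H(t,x,y)\in Hyp(D)$ for $t\in[0,1]$. The map $t\mapsto\gamma_{h_t}$ is continuous into $C^0(\mathbb{S}^1,\mathcal{QF})$ with the uniform topology, and each $\gamma_{h_t}$ avoids the degenerate cone $\{b^2-ac=0\}$ by hyperbolicity. The index, being the winding number of $\gamma_{h_t}$ about this cone, is a continuous integer-valued function of $t$, hence constant. This gives $\Ind(\gamma_f)=\Ind(\gamma_g)$.

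\textbf{Sufficiency ($\Leftarrow$).} Assume $\Ind(\gamma_f)=\Ind(\gamma_g)=2-k$. By (\ref{indice-polares}), both restrictions $F,G$ to the unit circle have exactly $2k$ simple zeros on $[0,2\pi)$, corresponding to $k$ real projective roots of each polynomial. The plan is to fix a canonical model $h_k\in Hyp(D)$ of this index (for instance one built from a product of linear factors $x\sin\varphi_j-y\cos\varphi_j$ at equally spaced angles times a suitable power of $x^2+y^2$) and show every $f\in Hyp(D)$ with $\Ind(\gamma_f)=2-k$ is $D$-hyperbolic isotopic to $h_k$. Transitivity then gives $f\sim h_k\sim g$. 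The isotopy $f\sim h_k$ would be built in two stages: first slide the $k$ real projective roots of $f$ along $\mathbb{RP}^1$ to the canonical positions of $h_k$, using path-connectedness of the configuration space of $k$ distinct points on the projective line; second, interpolate within the fiber of polynomials sharing those real roots, deforming the (necessarily definite) complementary factor of even degree $D-k$ toward that of $h_k$.

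\textbf{Main obstacle.} Hyperbolicity is a condition on the full Hessian $f_{xx}f_{yy}-f_{xy}^2$ over all of $\mathbb{R}^2\setminus\{0\}$, not a condition visible from the root pattern alone, so the hard part is ensuring the Hessian stays strictly negative throughout both stages. I would monitor this through formula (\ref{indice-polares}): the zero count of the restriction to $\mathbb{S}^1$ can only jump when two zeros collide or when zeros split off (or merge into) the circle, and both events correspond to a Hessian degeneration. Thus, provided the stage-one sliding is done without collisions, openness of $Hyp(D)$ (established earlier in the excerpt) keeps the intermediate polynomials hyperbolic. Stage two is more delicate because the straight-line homotopy in the complementary factor need not automatically preserve hyperbolicity of the full product, so it may have to be replaced by a path crafted specifically to the model $h_k$, or routed through a sequence of intermediate canonical forms for which hyperbolicity can be verified by direct inspection of the curve $\gamma_{h_t}$.
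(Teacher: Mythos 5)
Your necessity direction is correct and is essentially the paper's argument: a $D$-hyperbolic isotopy induces a continuous family of hyperbolic curves $\gamma_{h_t}$ avoiding the degenerate cone, and the winding number is locally constant along such a family.

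The sufficiency direction, however, is a genuinely different route from the paper's, and as written it contains a real gap. The paper does not attempt to slide roots or interpolate complementary factors. Instead it passes to the auxiliary curve $\alpha_f(\varphi)=(F(\varphi),F'(\varphi),F''(\varphi))$ in $\R^3$ and uses Arnold's analytic characterization of $D$-hyperbolicity (inequality (\ref{hyper-cond})), which says $f$ is hyperbolic iff $\alpha_f$ lies in a fixed open region of $\R^3$ whose complement is a quadric cone. Then Lemma \ref{igual-idices-ga-al} relates $\Ind(\gamma_f)$ to $\Ind(\alpha_f)$, so equality of indices puts $\alpha_f$ and $\alpha_g$ in the same homotopy class of loops in a space that deformation-retracts onto a circle, and a homotopy of $\alpha$-curves is then pushed back to a homotopy of polynomials by projecting to the first coordinate and multiplying by $r^D$. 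The payoff of this route is precisely that it sidesteps the problem you run into: hyperbolicity becomes a transparent open condition on the curve $(F,F',F'')$, so staying hyperbolic throughout the homotopy is automatic.

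Your argument for staying hyperbolic in stage one does not work. You argue that the zero count of $F$ on $\mathbb{S}^1$ can only jump at a Hessian degeneration, and conclude that collision-free sliding of roots therefore preserves hyperbolicity. This is the wrong implication. Formula (\ref{indice-polares}) is a consequence of hyperbolicity, not a characterization of it: it tells you the index once you already know the polynomial is hyperbolic, but a family $f_t$ can leave $Hyp(D)$ (the Hessian can degenerate somewhere on $\R^2\setminus\{0\}$) while the real roots of $f_t$ remain distinct and the zero count of $F_t$ on the circle is unchanged. Openness of $Hyp(D)$ only gives you persistence for small $t$, not invariance along an arbitrary path that preserves root multiplicities. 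You would need an actual sufficient criterion for hyperbolicity that is stable along your proposed paths. You yourself flag that stage two (interpolating the definite complementary factor) is not carried out and may require ad hoc surgery; that is a second, equally serious gap. So while the root-configuration idea is an honest plan, it is not a proof, and the core difficulty — certifying hyperbolicity along the deformation — is exactly what Arnold's criterion (\ref{hyper-cond}) and the $\alpha$-curve construction are designed to handle in the paper's argument.
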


\noindent \begin{proof}
Let us suppose that $H$ is a $D$-hyperbolic isotopy of the polynomials $f$ and $g$. In order to prove 
that the indices of $\gamma_f$ and $\gamma_g$ are equal, we will prove that these curves are homotopic.
An explicit expression for $H$ is $H(t,x,y) = \sum_{i=0}^D u_i(t) x^{D-i} y^i$, where
$u_i : [0,1] \rightarrow \R$ is a continuous function for $i\in \{0, \ldots ,D\}$.
For each $t\in [0,1],$ denote by $Q_t(x,y)$ the hyperbolic polynomial $H(t,x,y)$. The associated hyperbolic 
curve to $Q_t$ is
$$\gamma_{Q_t} (\varphi) = \left(\frac{\partial^2 H}{\partial x^2}(t, cos \,\varphi, sin \,\varphi),
\ \frac{\partial^2 H}{\partial x \partial y}(t, cos \,\varphi, sin \,\varphi),
\ \frac{\partial^2 H}{\partial y^2}(t, cos \,\varphi, sin \,\varphi)\right),$$
that is,
\begin{align*}
\,\gamma_{Q_t} (\varphi) = &\left( \sum_{i=0}^{D-2} (D-i) (D-i-1) u_i(t) (cos \,\varphi)^{D-i-2} (sin \,\varphi)^i, 
\right. \\
&\left. \ \ \, \sum_{i=1}^{D-1} i (D-i) u_i(t) (cos \,\varphi)^{D-i-1} (sin \,\varphi)^{i-1}, \
 \sum_{i=2}^{D} i (i-1) u_i(t) (cos \,\varphi)^{D-i} (sin \,\varphi)^{i-2}
\right).
\end{align*}

So, the homotopy $\Gamma$ we are looking for the hyperbolic curves $\gamma_f$ and 
$\gamma_g$ is the continuous function $\Gamma :[0,1]\times [0,2\pi] \rightarrow 
\mathcal{Q}$ defined as $\Gamma(t,\varphi) := \gamma_{Q_t} (\varphi)$.

\medskip
Now, suppose that $\Ind (\gamma_f) = \Ind (\gamma_g)$. Consider the Euclidean space 
$\R^3$ with coordinates $(u,v,w)$, and the subset described by:
\begin{equation}\label{set-hyper-cond}
D^2 u^2 + Duw - (D-1)v^2 < 0.
\end{equation}
This subset is a connected component of the complement of 
the cone $D^2 u^2 + Duw - (D-1)v^2 = 0$ that is homotopy equivalent to the circle.
On the other hand, in \cite[Theorem 1]{arn0} the author shows that: 

\centerline{\it A polynomial 
$f$ is $D$-hyperbolic if and only if the curve $F(\varphi) = 
f(cos \,\varphi, sin \,\varphi)$} 
\centerline{\it satisfies the hyperbolicity condition}
\begin{equation}\label{hyper-cond}
D^2 F^2 + D F F\, '' - (D-1) (F\, ')^2 <0.
\end{equation}
 
\noindent It follows that $F$ can have only simple zeros and nondegenerate critical points on the circle.
For our polynomial $f$ (and doing the same for $g$) we define in $\R^3 = \{(u,v,w)\}$ 
the curve
\begin{equation}
\alpha_f (\varphi) := \big(F(\varphi), F\, ' (\varphi), F\, ''(\varphi)\big).
\end{equation}
Since $f$ is hyperbolic, $\alpha_f (\varphi)$ is in the subset defined in (\ref{set-hyper-cond}) 
for each $\varphi \in [0,2\pi)$.

\begin{lemma}\label{igual-idices-ga-al}
$\Ind (\gamma_f) = 2 + \Ind (\alpha_f).$
\end{lemma}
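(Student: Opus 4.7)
The plan is to produce an algebraic identity expressing $(A - C) + 2B i$, where $A = f_{xx}$, $B = f_{xy}$, $C = f_{yy}$ are evaluated at $(\cos\varphi, \sin\varphi)$, as a product of $e^{2i\varphi}$ with an expression in $F, F', F''$. The rotation factor $e^{2i\varphi}$ contributes $+2$ to the winding number as $\varphi$ runs over $[0, 2\pi]$, which accounts exactly for the ``$+2$'' of the lemma; the remaining winding will be identified with $\Ind(\alpha_f)$. The tool for producing the identity is Euler's identity for homogeneous polynomials, applied both to $f$ (of degree $D$) and to its first partial derivatives (of degree $D-1$).

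Concretely, by combining the second-order Euler identity $x^2 f_{xx} + 2xy f_{xy} + y^2 f_{yy} = D(D-1) f$ with the direct expansion of $F''(\varphi)$ (which involves a $-\cos\varphi\, f_x - \sin\varphi\, f_y = -DF$ contribution by Euler's first-order identity), one obtains $A + C = D^2 F + F''$ and $(A-C)\cos 2\varphi + 2B \sin 2\varphi = (D^2 - 2D) F - F''$. Separately, taking the two first-order Euler identities $x f_{xx} + y f_{xy} = (D-1) f_x$ and $x f_{xy} + y f_{yy} = (D-1) f_y$, restricting to the unit circle, and combining them skew-symmetrically (multiply by $\sin\varphi$ and $\cos\varphi$ respectively, then subtract) yields $(A - C)\sin 2\varphi - 2B \cos 2\varphi = -2(D-1) F'$. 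These two scalar equations are precisely the real and imaginary parts of
\[
e^{-2i\varphi}\bigl[(A - C) + 2Bi\bigr] \;=\; \bigl[(D^2 - 2D) F - F''\bigr] + 2i(D-1) F',
\]
so we obtain the desired factorization
\[
(A - C) + 2Bi \;=\; e^{2i\varphi}\,\bigl\{[(D^2 - 2D) F - F''] + 2i(D-1) F'\bigr\}.
\]

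Taking the change of argument of both sides around $[0, 2\pi]$ and dividing by $2\pi$, the rotation $e^{2i\varphi}$ contributes $+2$ to $\Ind(\gamma_f)$, and the remaining contribution is the winding of $G(\varphi) := \bigl((D^2 - 2D) F - F'',\; 2(D-1) F'\bigr)$ about the origin of $\R^2$. To identify this winding with $\Ind(\alpha_f)$, I would invoke the linear projection $\pi:\R^3 \to \R^2$, $(u,v,w) \mapsto ((D^2 - 2D)u - w,\; 2(D-1) v)$, which sends $\alpha_f$ to $G$; its kernel is the line $\{v = 0,\, w = (D^2 - 2D) u\}$, on which the form $D^2 u^2 + Duw - (D-1) v^2$ equals $D^2(D-1) u^2 \ge 0$ and thus lies outside the hyperbolic region (\ref{set-hyper-cond}) except at the origin. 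Hence $\pi$ restricted to that region avoids $0$, and a standard fiberwise check (the intersection of each affine fiber of $\pi$ with the hyperbolic region is an open interval, hence contractible) shows that $\pi$ is a homotopy equivalence onto $\R^2 \setminus \{0\}$, so $\Ind(\alpha_f)$ is well-defined and agrees with the winding of $G$. The main effort lies in the algebraic bookkeeping that isolates the factor $e^{2i\varphi}$; once that factor is visible, the lemma follows immediately.
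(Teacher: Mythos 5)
Your proof is correct, and it takes a genuinely different route from the paper's. The paper's proof projects $\alpha_f$ orthogonally onto the plane $\bar{\Pi}: 2Du + w = 0$, observes that each revolution of the projected curve crosses $\{v=0\}$ twice so that $\Ind(\bar\alpha) = -\tfrac{1}{2}\#\{F'=0\}$, and then subtracts this from the cited identity (\ref{index-gama}), $\Ind(\gamma_f) = 2 - \tfrac{1}{2}\#\{F'=0\}$, taken from \cite{arn1}. Your argument avoids both the crossing count and the citation: the Euler-identity factorization
\[(A-C)+2Bi \;=\; e^{2i\varphi}\bigl\{[(D^2-2D)F - F''] + 2i(D-1)F'\bigr\}\]
(whose real and imaginary parts you derive correctly) makes the additive constant $2$ visible as the winding of $e^{2i\varphi}$, and displays the remaining factor as a linear image of $\alpha_f$ under a map $\pi$ whose kernel misses the hyperbolic region, so that its winding equals $\Ind(\alpha_f)$. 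This is more self-contained and arguably more illuminating about where the $2$ comes from. One small point: your projection $\pi$ kills the line $\R\cdot(1,0,D^2-2D)$, while the paper's orthogonal projection kills $\R\cdot(2D,0,1)$; since two transverse projections fix the index only up to sign, you should note that the two sign conventions agree (a one-line check at, say, $f = x^2-y^2$, where both give $\Ind(\alpha_f) = -2$). Also, a "homotopy equivalence" alone pins down $\pi_*$ only up to $\pm 1$ on $\pi_1$, so this sign check is what actually closes the argument, though it is routine.
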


\noindent \begin{proof}
Consider the vertical projection $T$ from $\R^3 = \{(u,v,w)\}$ onto the plane $\overline{\Pi}: 
2D u + w =0$, and denote by $\overline{\alpha_f}$ the projection of $\alpha_f$ onto this plane.
Note that the plane $\overline{\Pi}$ intersects the cone $D^2 u^2 + Duw - (D-1)v^2 = 0$ only
at the origin. When we restrict the projection $T$ to the subset (\ref{set-hyper-cond}) it induces 
an isomorphism $T_*$ from the fundamental group of the subset (\ref{set-hyper-cond}) onto the 
fundamental group of the punctured plane $\overline{\Pi}\setminus \{ \bar 0 \}$. It can be shown 
that the index of  $\alpha_f$ in the subset (\ref{set-hyper-cond}) is the same as the classical index of 
its projection $T (\alpha_f) = \overline{\alpha_f}$ in the punctured plane $\overline{\Pi}\setminus 
\{\bar 0\}$.
On the other hand, the map $Z: \overline{\Pi} \rightarrow \R^2 =\{(u,v)\},\, 
Z(u,v,w) = (u,v)$ induces an isomorphism between the fundamental group of $\overline{\Pi}\setminus 
\{\bar 0\}$ and that of $\R^2 \setminus \{\bar 0\}$. That is, the number of turns that  the curve
$\overline{\alpha_f} = T \circ \alpha_f $ makes around the origin  in the plane $\overline{\Pi}$ is the same 
as the number of turns that $\zeta $ makes around the origin in $\R^2$. So,  
\begin{eqnarray}\label{indices-equival}
\Ind\, ({\alpha_f}) = \Ind\, (\overline{\alpha_f}) = \Ind\, (\zeta).
\end{eqnarray}
Under the first isomorphism, the class 
of  $\alpha_f (\varphi) = \big(F(\varphi), F\, ' (\varphi), F\, ''(\varphi)\big)$ corresponds to the 
class of $\overline{\alpha_f}(\varphi)= \big(F(\varphi), F\, ' (\varphi), -2DF (\varphi)\big)$, and under the second isomorphism, this last one corresponds to the class of $\zeta (\varphi) = 
\big(F(\varphi), F\, ' (\varphi)\big)$.
From Theorem 2 of $\S$3 of \cite{arn0}, it follows that each revolution that $\zeta$ makes around the 
origin, it intersects each coordinate axis exactly twice. Thus, for each revolution that 
$\zeta$ does, $F$ has two more critical points. That is:
\begin{eqnarray}\label{indice-proy-alfa}
\Ind\, (\zeta) = -\frac{1}{2}\# \{\varphi \in [0, 2\pi): F\, '(\varphi) = 0\}.
\end{eqnarray}

\noindent On the other hand, it is proved in \cite[Theorem 3 and Theorem 4]{arn0} that the 
index of the hyperbolic curve $\gamma_f$ is determined by the number of critical points 
of the function $\, F$, that is,
\begin{equation}\label{index-gama}
\Ind (\gamma_f) = 2 - \frac{1}{2} \# \{\varphi \in [0,2\pi) : F\, ' (\varphi) = 0\}.
\end{equation}
According to (\ref{indices-equival}), (\ref{indice-proy-alfa}) and  (\ref{index-gama}), we obtain that $\Ind (\gamma_f) = 2 + 
\Ind (\alpha_f)$.
\end{proof}
\bigskip 

We recall that we are assuming that $\Ind (\gamma_f) = \Ind (\gamma_g)$. It follows, 
by Lemma \ref{igual-idices-ga-al} that $\Ind (\alpha_f) = \Ind (\alpha_g)$. 
We shall now prove that the curves $\alpha_f$ and $\alpha_g$ are homotopic in the
set (\ref{set-hyper-cond}). To do this, consider a continuous map $\mu:[0,2\pi] \rightarrow \R^3$ 
such that its image is completely contained in the subset (\ref{set-hyper-cond}), its initial point is 
$\alpha_f (0)$, its final point is $\alpha_g (0)$, and its projection onto the plane $\overline{\Pi}: 
2D u + w =0$ does not make any complete rotation around the origin.

\medskip

Consider now the closed curve $\beta_g: [0,2\pi] \rightarrow \R^3$ which starts at the initial point of 
$\alpha_f$, is formed by the union of $\mu$, the whole $\alpha_g$ and $\mu^{-1}$, where
$\mu^{-1} (\varphi) = \mu (2\pi - \varphi)$, that is, 
$$\beta_g (\varphi) = \left\{\begin{array}{ll}
\mu(2\varphi) &\mbox{if } \varphi \in [0,\pi]  \\
\alpha_g(4\varphi - 4\pi) &\mbox{if } \varphi \in \left[\pi,\frac{3\pi}{2}\right]\\
\mu^{-1} (4\varphi - 6\pi) &\mbox{if } \varphi \in \left[\frac{3\pi}{2},2\pi\right].
\end{array}
\right.$$
By construction, the index of $\beta_g$ is equal to that of $\alpha_g$. Since the fundamental group of the subset (\ref{set-hyper-cond}) is $\Z$, the curves $\beta_g$ and $\alpha_f$ are homotopic in (\ref{set-hyper-cond}),
that is, there exists a continuous function $H:[0,1]\times [0,2\pi] \rightarrow \R^3$ such that 
$H(0,\varphi) = \alpha_f$, $\, H(1,\varphi) = \beta_g$, and for each $t\in [0,1]$,
the set $H(t,-)$ is a continuous curve lying in the subset (\ref{set-hyper-cond}).
\medskip

We define now a homotopy $\tilde{H}:[0,1]\times [0,2\pi] \rightarrow \R^3$ between the 
curves $\beta_g$ and $\alpha_g$, that is
$$ \tilde{H}(t,\varphi) = \left\{\begin{array}{ll}
\mu\big((1-t)2\varphi + 2t\pi\big) &\mbox{if } \varphi \in [0,\pi] \\
\alpha_g(4\varphi - 4\pi) &\mbox{if } \varphi \in [\pi, \frac{3\pi}{2}]\\
\mu^{-1} ((1-t) (4\varphi - 6\pi)) &\mbox{if } \varphi \in \left[\frac{3\pi}{2},2\pi\right].
\end{array}\right.$$
In conclusion, we have obtained a homotopy between the curves $\alpha_f$ and 
$\alpha_g$. By considering the projection from $\R^3 = \{(u,v,w)\}$ 
into the first coordinate,
this homotopy induces a homotopy $\tilde{h}$ between the functions $F$ and $G(\varphi)
= g(cos \,\varphi, sin \,\varphi)$ where $\tilde{h}:[0,1]\times [0,2\pi] 
\rightarrow \R, \, \tilde{h}(0,\varphi) = F(\varphi)$ and $\tilde{h}(1,\varphi) = 
G(\varphi)$.
The desired homotopy between the polynomials $f$ and $g$ is then the function
$$h:[0,1]\times [0,2\pi] \times \R_{\geq 0} \rightarrow \R, \quad
h(t,\varphi, r) = r^D \tilde{h}(t,\varphi),$$
$\mbox{ with } 
h(0,\varphi, r) = r^D F(\varphi) = f(x,y) \mbox{ and } 
h(1,\varphi, r) = r^D G(\varphi) = g(x,y).$
\end{proof}
\medskip

\noindent {\bf Second step.}
From Theorem \ref{poli-homotopos} we obtain a well defined function from the set of connected 
components of $Hyp (D)$ into the set of indices of $\gamma$ such that to each connected component
$C$ of $Hyp (D)$ the index of the hyperbolic curve of any polynomial $f\in C$ is associated.
Moreover, this function is injective. So, the maximal number of values that $\Ind 
(\gamma)$ could reach is an upper bound for the number of connected components of $Hyp(D)$.
In what follows we analyze the different values that $\Ind (\gamma)$ may take.

\begin{remark}[\cite{arn0}, p.1035]\label{obs-arnold}
Let $f \in \R[x,y]$ be a hyperbolic homogeneous polynomial and let $F(\varphi)= 
f(cos \,\varphi, sin \,\varphi)$ be
 the restriction of $f$ to the unit circle. Then, the number of zeroes of  the function $F$ in  
$[0, 2\pi]$  is equal to the number of its critical points.
\end{remark}

\begin{remark}\label{cotas-ind-gama}
Let $f \in \R[x,y]$ be a hyperbolic homogeneous polynomial of degree $D$. Then,
\begin{align*}\label{desig-remark-2.3}
2-D \leq &\,\Ind (\gamma_f) \leq 0 \ \ \mbox{ if } D \mbox{ is even}. \\
2-D \leq &\,\Ind (\gamma_f) \leq -1   \mbox{ if } D \mbox{ is odd}.
\end{align*}
\end{remark}

\noindent \begin{proof}
First, let us note that the equation 
(\ref{index-gama}) becomes, by remark \ref{obs-arnold}, 
\begin{equation}\label{index-gam-ceros}
 \Ind (\gamma_f) = 2 - \frac{1}{2} \# \{\varphi \in [0,2\pi) : F (\varphi) = 0\}.
\end{equation}
As the function $F (\varphi) = f (cos \,\varphi, sin \,\varphi)$ is the restriction of 
$f$ to the unit circle, then only the real linear factors of $f$ contribute to 
the set of zeroes of $F$. Moreover, each real linear factor of $f$ gives 
rise to exactly two zeroes of $F$. Then, if the degree of $f$ is $D$, $F$ has at most $2D$ 
zeroes, and
\begin{equation}\label{cota-ind-gama}
2-D \leq \Ind (\gamma_f) \leq 2.
\end{equation}
In \cite[Corollary of \S 6 $\&$ Corollary 4 of \S 7]{arn0} it is proved that if $D$ is odd, 
$\Ind (\gamma_{f}) \leq -1$; while in the even case for $D$, $\Ind (\gamma_{f}) \leq 0$. 
\end{proof}

\begin{remark}\label{pol-ind-curvas}
Two hyperbolic homogeneous polynomials in $\R[x,y]$ have the same number of real 
linear factors if and only if the indices of their associated hyperbolic curves are equal.
\end{remark}

\noindent \begin{proof}
Suppose that $f$ and $g$ have exactly $m$ real linear factors. By equation 
(\ref{index-gam-ceros}) we have that 
$\Ind (\gamma_f) = 2 -m = \Ind (\gamma_g)$ since the functions $ F (\varphi) =  f(cos \,
\varphi, sin \,\varphi)$ and 
$ G (\varphi) =  g(cos \,\varphi, sin \,\varphi)$ have exactly $2m$ zeroes. 

Inversely, suppose that $\Ind (\gamma_f) =  \Ind (\gamma_g)$. Then, by equation 
(\ref{index-gam-ceros})
the functions $F$ and $G$ have the same number of zeroes, which implies that the 
polynomials $f$ and $g$ have the same number of real linear factors.
\end{proof}
\medskip 

By the inequalities of Remark \ref{cotas-ind-gama}, and by the relation $\Ind (\gamma_{f}) 
\equiv D$ mod ($2$) for any $D$ (see results in \cite[\S 6]{arn0}), we 
conclude that when $D$ is even, the index of $\gamma_{f} $ 
may reach the values of the set $\{ 0, -2, -4 , \cdots , -(D-2) \}$. That is, there are at most
$\frac{D}{2}$ possible values for the index of $\gamma_f$ whenever $D$ is even. 
A similar analysis for the odd case leads us to claim that there are at most $\frac{D-1}{2}$ 
possible values for the index of $\gamma_f$ which are $\{ -1, -3, -5 , \cdots , -(D-2) \}$.
\medskip 

Hence, $Hyp(D)$ has at most $\frac{D}{2}$ connected components if $D\geq 6$ is even, and at most
$\frac{D-1}{2}$  for each odd number $D\geq 5$. \medskip

\noindent {\bf Third step.} 
At this stage we will exhibit a family of hyperbolic polynomials different 
from those given in (\ref{polis-arnold}) with the property that for each integer number $j$ 
allowed by the inequalities of Remark \ref{cotas-ind-gama} there exists a polynomial $f$ in this 
new family such that the index of its associate curve $\gamma_f$ equals $j$.

\begin{remark}\label{pol-fact}
For each natural number $k$ the polynomials 
$$ R_{2k+1}(x,y):= x\,\Pi_{i=1}^k (x^2 - i^2 y^2) \mbox{ \ and \ } 
 R_{2k+2}(x,y):=x \big(x- (k+1) y\big) \Pi_{i=1}^k (x^2 - i^2 y^2)$$
of degree $2k+1$ and $2k+2$, respectively, are hyperbolic. Indeed, It is proved in \cite{mgua} that any homogeneous polynomial of degree $D\geq 2$ having $D$ non-proportional real linear factors, 
is hyperbolic. Moreover, by expression (\ref{index-gam-ceros}), the index of  
$\gamma_{R_{2k+j}}$ is $2-2k-j$ for $j\in \{1,2\}$. The arguments used in \cite{mgua} to prove 
this property are the same as those developed in \cite{AOR} for generic nonhomogeneous polynomials.
\end{remark}

\begin{theorem}\label{teo-pol-hiper-impares} 
Let $n \in \N$ be a fixed number. For each $k\in \N$,
the homogeneous polynomials
\begin{align*}
&i\mbox{)}\, g_{2n+2}(x,y) :=  (x^2 - y^2) (x^{2n} + y^{2n})  \mbox{ of even degree } 2n+2, 
\mbox{ with }n\geq 2,\\
&ii\mbox{)}\, f_{2k+1}(x,y) := x\, (x^{2n} + y^{2n}) \Pi_{i=1}^k (x^2 - i^2 y^2)
\mbox{ of odd degree } 2n+2k+1, \mbox{ and}\\
&iii\mbox{)}\,  f_{2k+2}(x,y) := x\, \big(x^{2n} + y^{2n}\big)  \big(x- (k+1) y\big)
\Pi_{i=1}^k (x^2 - i^2 y^2) \mbox{ of even degree } 2n+2k+2,
\end{align*}
 are hyperbolic. Moreover, the index of $\gamma_{g_{_{2n+2}}}$ is $0$ and the index of
$\gamma_{f_{_{2k+j}}}$ is $2-2k-j $ for  $j\in \{1,2\}$.
\end{theorem}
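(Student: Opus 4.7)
The plan is to compute the index from (\ref{index-gam-ceros}) assuming hyperbolicity, and then verify hyperbolicity via Arnold's analytic criterion (\ref{hyper-cond}).

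For the index, I would observe that $x^{2n}+y^{2n}$ restricts to $\cos^{2n}\varphi+\sin^{2n}\varphi$, which is strictly positive on the unit circle for $n\geq 1$. Hence the zeros of $F(\varphi)=f(\cos\varphi,\sin\varphi)$ come entirely from the real linear factors of $f$, each contributing two zeros on $[0,2\pi)$. Counting gives $4$ zeros for $g_{2n+2}$, $4k+2$ zeros for $f_{2k+1}$, and $4k+4$ zeros for $f_{2k+2}$. Substitution into (\ref{index-gam-ceros}) yields the indexes $0$, $2-2k-1$, $2-2k-2$ stated in the theorem.

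For hyperbolicity, I would decompose each polynomial as $p\cdot q$ with $q=x^{2n}+y^{2n}$ and $p$ the product of all real linear factors of $f$ (of degree $d:=D-2n$, with factors distinct up to scalar). By \cite{mgua} (cited in Remark \ref{pol-fact}), $p$ is hyperbolic, so its restriction $\Phi$ satisfies the degree-$d$ hyperbolicity inequality $d^2\Phi^2+d\Phi\Phi''-(d-1)(\Phi')^2<0$. With $\Psi=q|_{S^1}>0$ and $F=\Phi\Psi$, the product rule expansion of $D^2F^2+DFF''-(D-1)(F')^2$ produces three groups: a $\Psi^2$-term whose bracket differs from the degree-$d$ hyperbolicity expression by a correction of order $n$ in $\Phi,\Phi',\Phi''$; a cross term $2\Phi\Phi'\Psi\Psi'$ of indefinite sign; and a $\Phi^2$-term involving $\Psi\Psi''$ and $(\Psi')^2$. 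The strategy is to show the total sum is negative. A helpful reduction is the substitution $t=\sin^2\varphi\in[0,1]$, under which $\Psi=t^n+(1-t)^n$ becomes a polynomial in $t$, and (\ref{hyper-cond}) becomes a polynomial inequality on $[0,1]$ amenable to direct certification.

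The main obstacle is controlling the cross and correction terms, which have no definite sign. The delicacy of the statement is illustrated by the failure of the analogous construction with $(x^2+y^2)^n$ in place of $x^{2n}+y^{2n}$: the polynomial $x^4-y^4=(x^2-y^2)(x^2+y^2)$ has Hessian $-144\,x^2y^2$, which vanishes on the coordinate axes, so is not hyperbolic. Thus the oscillation of $\cos^{2n}\varphi+\sin^{2n}\varphi$ (in contrast to the constancy of $(\cos^2\varphi+\sin^2\varphi)^n=1$) must be used essentially, and the restriction $n\geq 2$ in part (i) marks the regime where this oscillation is strong enough to force the Hessian strictly negative everywhere.
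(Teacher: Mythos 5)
Your index computation is correct and in fact coincides with the paper's: once hyperbolicity is granted, equation (\ref{index-gam-ceros}) reduces the count to real linear factors, and $x^{2n}+y^{2n}>0$ on $\Su^1$ forces all zeros of $F$ to come from those factors. No issue there.

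The hyperbolicity argument, however, is not a proof — it is a plan whose central step is left open, and that step is exactly where the difficulty lies. You propose to write $F=\Phi\Psi$, expand the criterion $D^2F^2+DFF''-(D-1)(F')^2$, and then ``show the total sum is negative,'' while explicitly admitting that the cross term and the correction term ``have no definite sign.'' Your own remark about $(x^2-y^2)(x^2+y^2)=x^4-y^4$ having Hessian $-144x^2y^2$ (vanishing on the axes) shows that the inequality does not follow from hyperbolicity of the linear-factor part $p$ plus positivity of $q$ on the circle; some genuine estimate on $\Psi$ and its derivatives is required, and you do not supply one. As stated the argument cannot be checked or completed without an essentially new idea, so the gap is real.

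The paper closes this gap by a different route. It does \emph{not} work with the single inequality (\ref{hyper-cond}) for the full product. For case (i) it computes $\Hess g_{2n+2}(x,1)$ explicitly as a polynomial in $x$ and bounds it on $[0,1]$ through a chain of elementary lemmas (plus a finite check for small $n$). For cases (ii)–(iii) it uses induction on $k$, with the base case $f_3 = x\,g_{2n+2}$, and the key engine is Lemma \ref{lema-uno-mas-lineal}: $\ell f$ is $(D+1)$-hyperbolic precisely when the linear form $\ell$ does not divide $\ell_x f_y - \ell_y f_x$, a clean divisibility criterion that follows from Euler's formula and reduces each inductive step to a one-point sign evaluation ($F(k+1)>0$, $R(-(k+1))>0$). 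That lemma is the structural tool missing from your proposal; without it, or an equivalent way to handle multiplication by a linear factor, the inductive decomposition you would need has no leverage. If you want to pursue your all-at-once expansion instead, you would have to prove a quantitative lower bound on $-\bigl(D^2\Psi^2+D\Psi\Psi''-(D-1)(\Psi')^2\bigr)$ for $\Psi=\cos^{2n}\varphi+\sin^{2n}\varphi$ strong enough to absorb the indefinite cross term $2D\Phi\Phi'\Psi\Psi' - 2(D-1)\Phi\Phi'\Psi\Psi'$ and the mismatch between the degree-$d$ and degree-$D$ coefficients in the $\Psi^2$ block; none of that is present, and it is not obviously easier than the paper's route.
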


We remark that the polynomial $g_4 (x,y) = (x^{2} - y^{2})(x^{2} + y^{2})$ (taking $n=1$)
is not hyperbolic because its Hessian polynomial $(\Hess g_4 )(x,y) = -144\,  x^{2} y^{2}$ 
equals zero along the coordinate axes.\medskip

For $D =9$ this Theorem gives the following three hyperbolic polynomials:
\begin{align*}
t_1 &= x\,  (x^{2} + y^{2})\Pi_{i=1}^3 (x^2 - i^2 y^2), &\Ind (\gamma_{_{t_1}}) = -5,\\
t_2 &= x\,  (x^{4} + y^{4})\Pi_{i=1}^2 (x^2 - i^2 y^2), &\Ind (\gamma_{_{t_2}}) = -3,\\
t_3 &= x\,  (x^{6} + y^{6}) (x^2 - y^2), &\Ind (\gamma_{_{t_3}}) = -1.
\end{align*}
Thus, for $D =9$ the following four hyperbolic
polynomials correspond to the four connected components of $Hyp (9)$: $t_1, \, t_2, \, t_3$ and 
$R_{9}(x) = x\,\Pi_{i=1}^4 (x^2 - i^2 y^2)$ with $\Ind (\gamma_{_{R_{9}}}) = -7$.
\medskip

In general, for each odd natural number $D\geq 3$, Theorem \ref{teo-pol-hiper-impares} provides 
$\frac{D-3}{2}$ different examples of hyperbolic polynomials of degree $D$ such that the indices of 
their hyperbolic curves have distinct values. Indeed, by considering the notation 
$E_{2n} (x,y) := x^{2n}+y^{2n}$ 
and that of Remark \ref{pol-fact}, we have that the $(D-3)/2$ hyperbolic polynomials of degree $D$ are:
$$E_2 R_{D-2}, \ \  E_4 R_{D-4}, \ \ldots , \ E_{D-3} R_{3},$$
and the indices of their hyperbolic curves are
$$ \Ind (\gamma_{_{E_2 R_{D-2}}}) = 4-D, \ 
\Ind (\gamma_{_{E_4 R_{D-4}}}) = 6-D, \ \ldots , \  \Ind (\gamma_{_{E_{D-3} R_{3}}}) = -1.$$
Also taking into account the hyperbolic polynomial $R_{2k+1}$, with $k =\frac{D-1}{2}$, which satisfies
that $\Ind (\gamma_{_{R_D}}) = 2-D,$ we obtain a representative polynomial of each connected component of
$Hyp (D)$. \medskip

Likewise, for each even natural number $D>4$, Theorem \ref{teo-pol-hiper-impares} provides $\frac{D}{2}-1$
hyperbolic polynomials of degree $D$ such that their hyperbolic curves have
different indices, namely,
\begin{equation}\label{hyp-grado-par}
E_2 R_{D-2}, \ \  E_4 R_{D-4}, \ \ldots , \ E_{D-4} R_{4}, \ g_{2\left(\frac{D-2}{2}\right)+2},
\end{equation}
and the indices of their hyperbolic curves are
$$ \Ind (\gamma_{_{Q_2 R_{D-2}}}) = 4-D, \,
\Ind (\gamma_{_{Q_4 R_{D-4}}}) = 6-D,\, \ldots , \,\Ind (\gamma_{_{Q_{D-4} R_{4}}}) = -2,  \,
\Ind (\gamma_{_{g_{_D}}}) =0.$$
Thus, by adding the polynomial $R_{2k+2}$ with $k =\frac{D-2}{2}$ to those shown in expression 
(\ref{hyp-grado-par}), we achieve a representative polynomial in each of the $D/2$ connected 
components of $Hyp (D)$.
\medskip

\noindent\begin{proof}[Proof of Theorem \ref{teo-pol-hiper-impares}]
{\bf Case $i$)} 
In order to verify that the Hessian polynomial of $g_{2n+2}$ is negative outside the origin,
we shall prove it first for the straight lines having a slope greater than or equal to 1. To carry out this
case, we will prove that the restriction of the Hessian polynomial of $g_{2n+2}$ to the line $y=1$ 
is negative when $0 \leq x \leq 1$. Since the $x$-variable polynomial obtained consists only of monomials of 
even degree, the proof is also valid for $-1 \leq x\leq 0$, that is, for the straight lines having a negative slope of magnitude greater than or equal to $1$. 
This proof will imply the other cases. The function, obtained as the restriction of the Hessian polynomial of $g_{2n+2}$ to the line $x=1$, limited to the interval $|y| \leq 1,$
corresponds to the study of the Hessian polynomial of $g_{2n+2}$ on the rest of the straight lines.
The symmetries of $g_{2n+2}$, like $g_{2n+2}(x,y) = - g_{2n+2}(y,x)$ imply 
symmetries of $\Hess g_{2n+2}$, like $(\Hess g_{2n+2})(x,y) = (\Hess g_{2n+2})(y,x)$. Because of this, if $(\Hess g_{2n+2})(x,y)<0$ for all $(x,y)$ in the straight line joining the points $(-1,1)$ and 
$(1,1)$ implies that $(\Hess g_{2n+2})(x,y)<0$ for all $(x,y)$ in the line that joins $(1,-1)$ with 
$(1,1)$.
\medskip

After some straightforward calculations we obtain:
\medskip

\noindent $(\Hess g_{2n+2})(x,1) = h_0(x)+h_2(x)+h_{2n-2}(x)+h_{2n}(x)+h_{2n+2}(x)
+h_{4n-2}(x)+h_{4n}(x), $
\medskip

\noindent where: 

\begin{align*}
 &h_0(x)=-4-12n-8n^2 \\
 &h_2(x)=(-4n-8n^2)x^2 \\
 &h_{2n-2}(x)=(-4n-4n^2 +16n^3 + 16n^4) x^{2n-2} \\
 &h_{2n}(x)=(-8-24n-24n^2 -32n^3-32n^4) x^{2n} \\
 &h_{2n+2}(x) = (-4n-4n^2 +16n^3+16n^4) x^{2n+2} \\
 &h_{4n-2}(x)=(-4n-8n^2) x^{4n-2} \\
 &h_{4n}(x)=(-4-12n-8n^2) x^{4n}. 
\end{align*}

In order to prove that $(\Hess g_{2n+2})(x,1)$ is negative for $x\in [0,1],$ we shall consider the polynomial 
$S= h_0+h_2+h_{2n-2}+h_{2n}+h_{2n+2}$ that satisfies $(\Hess g_{2n+2})(x,1) \leq S(x)$ for $x\in \R$.
For $n \leq 11$, it can be checked with a computer that $S(x) \leq 0$ for $x\in [0,1]$.
\medskip

For $n \geq 11$, note that 
\[ S(x) \leq \big(-8 n^2 - 8 n^2 x^2 +16 n^4 (1-x^2)^2x^{2(n-1)}
\big) + \big(-12 n -4n x^2 + 16n^3   (1-x^2)^2x^{2(n-1)} \big).\] 
It only remains to prove that the last sum is negative for $x\in [0,1]$.
After a straightforward calculus, we have the following result.
\begin{lemma}\label{lema-1} Let $g: \left[0,1\right] \rightarrow \mathbb{R}$ be the function
$ g(x)=(1-x^2)^2x^{2(n-1)}$. This function reaches a maximum value at the point 
$x_n=\sqrt{\frac{n-1}{n+1}}$, and it is $\, g(x_n)= \frac{4}{(n+1)^2 \left(1+\frac{2}{n-1}  \right)^{n-1}}.$
\end{lemma}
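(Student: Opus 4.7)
The plan is to treat this as an elementary single-variable calculus problem: locate the critical points of $g$ on $[0,1]$, argue that the interior critical point yields the global maximum, and then simplify the value at this point into the form stated.

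First I would compute the derivative by the product rule. Writing $g(x) = (1-x^2)^2 x^{2(n-1)}$, we get
\[
g'(x) = -4x(1-x^2)\, x^{2(n-1)} + 2(n-1)(1-x^2)^2 x^{2n-3}.
\]
Factoring out $2x^{2n-3}(1-x^2)$ (which is legitimate for $n\geq 2$ and $x\in(0,1)$) gives
\[
g'(x) = 2x^{2n-3}(1-x^2)\bigl[(n-1)(1-x^2) - 2x^2\bigr] = 2x^{2n-3}(1-x^2)\bigl[(n-1)-(n+1)x^2\bigr].
\]
Hence the critical points in $[0,1]$ are $x=0$, $x=1$, and $x^{2} = (n-1)/(n+1)$. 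Since $g(0)=g(1)=0$ and $g(x)>0$ on the open interval $(0,1)$, the interior critical point
\[
x_n = \sqrt{\tfrac{n-1}{n+1}}
\]
must be the global maximum on $[0,1]$, proving the first assertion.

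For the value $g(x_n)$, observe that $1-x_n^2 = 2/(n+1)$, so
\[
g(x_n) = \left(\frac{2}{n+1}\right)^{2}\left(\frac{n-1}{n+1}\right)^{n-1} = \frac{4(n-1)^{n-1}}{(n+1)^{n+1}}.
\]
The final step is purely algebraic bookkeeping: rewrite the quotient $(n+1)/(n-1)$ as $1 + 2/(n-1)$ and redistribute the remaining factors of $(n\pm 1)$ so that the expression takes the form $4 / \bigl[c_{n}\bigl(1 + \tfrac{2}{n-1}\bigr)^{n-1}\bigr]$ matching the statement of the lemma. There is no real obstacle here — the argument is entirely routine calculus, and the only care required is in factoring $g'(x)$ cleanly and tracking exponents in the last simplification.
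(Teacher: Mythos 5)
Your approach is the same as the paper's: locate the critical points of $g$, observe that the endpoints give zero while $g>0$ on the interior, and evaluate at $x_n$. Your derivative computation and the resulting value
\[
g(x_n)=\frac{4(n-1)^{n-1}}{(n+1)^{n+1}}=\frac{4}{(n+1)^2\left(1+\tfrac{2}{n-1}\right)^{n-1}}
\]
are correct.

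One thing to flag: you hand-wave the final algebraic step ("redistribute the remaining factors of $(n\pm1)$ ... matching the statement of the lemma"), but if you actually carry it out you will find that the constant in the denominator is $(n+1)^2$, \emph{not} $(n-1)^2$ as printed in the lemma statement. The paper's own sketch also arrives at $(n+1)^2$, so the $(n-1)^2$ in the stated lemma is a typo. Since Lemmas \ref{lema-2} and \ref{lema-3} use the bound $g(x_n)\le\frac{4}{5}\cdot\frac{1}{(n+1)^2}$, the correct version with $(n+1)^2$ is the one that actually feeds into the rest of the argument. You should not present your final step as "matching the statement" without checking it — doing so here would have surfaced the discrepancy.
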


\begin{lemma}\label{lema-2}
For any $n \geq 11$ and for $0 \leq x \leq 1$ the inequality
\begin{align}\label{des-1}
-8n^2-8n^2x^2+16n^4g(x) <0
\end{align}
holds. 
\end{lemma}

\noindent {\bf Proof.}
\begin{enumerate}
\item 
For $n \geq 11$, it can be shown using Lemma \ref{lema-1} that
\begin{align}\label{g-valor-max}
g(x_n) = \frac{4}{(n+1)^2} \left( \frac{1}{a_{n-1}}\right) \leq \frac{4}{5} \frac{1}{(n+1)^2}.
\end{align}
This inequality implies,
\[ 16n^4g(x) \leq 16n^4g(x_n) \leq 16 \left( \frac{4}{5}\right) \left(\frac{n^2}{(n+1)^2}\right) (n^2) 
< 13 n^2. \]
Thus,
\[ -8n^2 +16n^4g(x) -8n^2x^2 < -8n^2+13n^2 = 5n^2 -8n^2x^2,\] and this expression 
 is less than zero for $x \in \left( \sqrt{\frac{5}{8}}, 1 \right]$.
\item Consider now the interval $\left[0, \sqrt{\frac{5}{8}} \right]$ and $n \geq 11$. So, the function 
$g$ is increasing in $\left[0, \sqrt{\frac{5}{8}} \right]$. It can be shown that
\[ 16 n^4 g(x) \leq n^4 \left(\frac{9}{4}\right) \left(\frac{5}{8} \right)^{n-1} < 300. \]
Finally,  \ 
$-8n^2x^2-8n^2 +16n^4g(x) < -8n^2 +16n^4g(x) < -8n^2+300 <0. \ \ $ $\scalebox{0.8}{$\blacksquare$}$ 
\end{enumerate}
\vskip 0.3cm

\begin{lemma}\label{lema-3}
For each $n \geq 11$ and for $0 \leq x \leq 1$ the inequality
\begin{equation}\label{des-2}
-12n-4nx^2+16n^3g(x) <0
\end{equation}
holds. 
\end{lemma}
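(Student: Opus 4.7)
The plan is to mimic the two-region argument used for Lemma \ref{lema-2}, with the cutoff adjusted to the new coefficients. Rewrite the left-hand side of (\ref{des-2}) as $-12n - 4 n x^2 + 16 n^3 g(x)$ and split the interval $[0,1]$ at $x = 1/2$.

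On $\left[\frac{1}{2},1\right]$ I would apply Lemma \ref{lema-1} together with the estimate $(1+\frac{2}{n-1})^{n-1} \geq 5$ (valid for $n \geq 11$, as already used in the proof of Lemma \ref{lema-2}), obtaining
\[ 16 n^3 g(x) \leq 16 n^3 g(x_n) \leq \frac{64}{5}\cdot \frac{n^3}{(n+1)^2} < 13\,n. \]
Substituting into (\ref{des-2}) yields $-12 n - 4 n x^2 + 16 n^3 g(x) < n - 4 n x^2 = n(1 - 4 x^2) \leq 0$ for every $x \geq 1/2$, which closes this sub-interval.

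On $\left[0, \frac{1}{2}\right]$, observe that for $n \geq 11$ the critical point $x_n = \sqrt{(n-1)/(n+1)}$ satisfies $x_n > 1/2$, so by Lemma \ref{lema-1} the function $g$ is monotonically increasing on $[0, 1/2]$. Consequently
\[ g(x) \leq g\!\left(\tfrac{1}{2}\right) = \left(\tfrac{3}{4}\right)^{2} \left(\tfrac{1}{2}\right)^{2(n-1)} = \frac{9}{16}\cdot 4^{-(n-1)}, \]
so $16 n^3 g(x) \leq 9\, n^3\, 4^{-(n-1)}$. A direct check shows this quantity is less than $1$ at $n = 11$, and since the ratio $(n+1)^3/(4 n^3)$ is strictly less than $1$ for $n \geq 2$, the sequence $9\, n^3\, 4^{-(n-1)}$ decreases and stays below $1$ for all $n \geq 11$. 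Therefore on this sub-interval
\[ -12 n - 4 n x^2 + 16 n^3 g(x) < -12 n + 1 < 0. \]

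The only delicate point is choosing a cutoff that works in both regions simultaneously: at $x = 1/2$ the slack $n - 4 n x^2$ just absorbs the $13 n$ bound coming from Lemma \ref{lema-1}, and the same value is small enough to keep $g$ monotonically increasing on the complementary sub-interval. Because the factor multiplying $g(x)$ has dropped from $n^4$ (as in Lemma \ref{lema-2}) to $n^3$, no improvement of the $n \geq 11$ threshold is needed; the argument is a direct parallel of Lemma \ref{lema-2} with slightly simpler arithmetic.
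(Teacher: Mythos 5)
Your proof is correct and follows essentially the same two-region strategy as the paper: the same cutoff at $x=1/2$, the same appeal to Lemma~\ref{lema-1} to get $16n^3g(x)<13n$ on the upper sub-interval, and the same monotonicity argument on $[0,1/2]$. The only (cosmetic) differences are that you make the arithmetic on $(1/2,1]$ explicit via $n(1-4x^2)\leq 0$ and obtain the tighter bound $16n^3g(x)<1$ rather than the paper's $<5$ on $[0,1/2]$; both suffice.
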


\noindent \begin{proof}
As a consequence of inequality (\ref{g-valor-max}) we get the inequalities
\[ 16n^3g(x) \leq 16n^3g(x_n) \leq 16 \left( \frac{4}{5}\right) \left(\frac{n^2}{(n+1)^2}\right) (n) < 13 n. \]
Thus, for $x \in \left( \frac{1}{2}, 1 \right]$ the inequality
$\, -12n-4nx^2+16n^3g(x) <0 \,$ holds.
 Consider now the interval $\left[0, \frac{1}{2} \right]$, and $n \geq 11$. Note that the function $g$ 
is increasing in $\left[0, \frac{1}{2} \right]$.
Moreover, for $n\geq 5$ it is true that $16n^3g(x)\leq 16n^3g(1/2) < 5$. Therefore,

\bigskip 
\hskip 3cm $-12n-4nx^2+16n^3g(x) < -12n-4nx^2+5 < 0. $
\end{proof}
\medskip

Finally, by Lemma \ref{lema-2} and Lemma \ref{lema-3} we conclude that 
$\, (\Hess g_{2n+2})(x,1) < 0 \mbox{ for } x \in \left[0, 1 \right].$
\medskip

\noindent {\bf Case $ii$)} The proof is based on the inductive method on $k\geq 1$. 
\begin{itemize}
\item Taking $k=1$, we shall prove that the polynomial 
$f_{3}(x,y) := x\, (x^{2n} + y^{2n})  (x^2 - y^2)$ is hyperbolic. Consider the following result.

\begin{lemma}\label{lema-uno-mas-lineal}
Let $f\in H_D[x,y]$ be a $D$-hyperbolic polynomial and $\ell (x,y) = ax+by$ be a linear 
polynomial. Then,  the
product $\ell f$ is a $(D+1)$-hyperbolic polynomial if and only if $\ell$ is not a 
factor of $\ell_x f_y - \ell_y f_x$.
\end{lemma}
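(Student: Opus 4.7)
The plan is to transfer the question to Arnold's circle criterion~(\ref{hyper-cond}): a homogeneous polynomial $h$ of degree $d$ is hyperbolic iff $\mathrm{HC}_d(H):=d^2H^2+dHH''-(d-1)(H')^2<0$ on $\mathbb{S}^1$, where $H(\varphi)=h(\cos\varphi,\sin\varphi)$. Set $L(\varphi):=\ell(\cos\varphi,\sin\varphi)=a\cos\varphi+b\sin\varphi$, which satisfies $L''=-L$ and $L^2+(L')^2=a^2+b^2$, and $F(\varphi):=f(\cos\varphi,\sin\varphi)$. The goal is to show that $\mathrm{HC}_{D+1}(LF)\le 0$ pointwise on $\mathbb{S}^1$, with strict negativity failing exactly at common zeros of $L$ and $F$. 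As a preliminary, Euler's identity $xf_x+yf_y=Df$, evaluated along the line $\ell=0$ parametrised by $(x_0,y_0)=t(-\ell_y,\ell_x)$, yields $tP(x_0,y_0)=Df(x_0,y_0)$ with $P:=\ell_xf_y-\ell_yf_x$; hence $\ell\mid P\iff\ell\mid f\iff F$ vanishes at a zero of $L$.

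The ``only if'' direction is then immediate: if $\ell\mid f$, write $f=\ell g$, so $\ell f=\ell^2 g$. A direct differentiation shows that on $\ell=0$ the entries of the Hessian matrix of $\ell^2 g$ are $2\ell_x^2 g$, $2\ell_x\ell_y g$, $2\ell_y^2 g$; this matrix has rank at most $1$, its determinant vanishes along $\ell=0$, and therefore $\ell f\notin Hyp(D+1)$.

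For the ``if'' direction, expand $\mathrm{HC}_{D+1}(LF)$ using $L''=-L$ to obtain
\[
\mathrm{HC}_{D+1}(LF)=L^2\,\mathrm{HC}_D(F)+L^2\bigl(DF^2+FF''\bigr)-(LF'-L'F)^2-(D-1)(L')^2F^2.
\]
Since $f$ is hyperbolic, $\mathrm{HC}_D(F)<0$, equivalently $DF^2+FF''<(D-1)(F')^2/D$; multiplying by $L^2\ge 0$ (equality iff $L=0$) and substituting into the expansion, the last three summands consolidate into a single square, giving
\[
\mathrm{HC}_{D+1}(LF)\le L^2\,\mathrm{HC}_D(F)-\tfrac{1}{D}\bigl(LF'-DL'F\bigr)^{2},
\]
with equality iff $L=0$. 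Where $L\ne 0$ the right-hand side is strictly negative, so $\mathrm{HC}_{D+1}(LF)<0$. Where $L=0$, direct substitution into the original expansion gives $\mathrm{HC}_{D+1}(LF)=-D(L')^2F^2$, which, since $L'\ne 0$ at the zeros of $L$, is negative iff $F\ne 0$ there---equivalently, by the preliminary, iff $\ell\nmid P$. The main technical hurdle is the collapse of the three summands into the single square $-\tfrac{1}{D}(LF'-DL'F)^2$; it depends crucially on the identity $L''=-L$ making the coefficients balance, and I expect it to be routine modulo careful bookkeeping.
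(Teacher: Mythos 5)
Your proof is correct and takes a genuinely different route from the paper's. The paper works directly with the Hessian determinant in Cartesian coordinates: it expands $\Hess(\ell f)$, applies Euler's formula to the partial derivatives $f_x$ and $f_y$ to collapse the cross terms, and arrives at the exact identity $\Hess(\ell f) = \frac{D+1}{D-1}\,\ell^2\,\Hess f - (\ell_x f_y - \ell_y f_x)^2$, from which the conclusion drops out immediately since the first summand is a negative multiple of $\ell^2$ and the second is a perfect square with a minus sign. You instead transfer the problem to the circle via Arnold's criterion~(\ref{hyper-cond}), expand $\mathrm{HC}_{D+1}(LF)$ in terms of $\mathrm{HC}_D(F)$ (the relation $L''=-L$ enters here, in the expansion, rather than in the final consolidation into a square, a minor misattribution in your write-up), and then use the hyperbolicity of $F$ to derive the inequality $\mathrm{HC}_{D+1}(LF)\le L^2\,\mathrm{HC}_D(F)-\frac{1}{D}(LF'-DL'F)^2$. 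In fact the stronger exact identity $\mathrm{HC}_{D+1}(LF)=\frac{D+1}{D}L^2\,\mathrm{HC}_D(F)-\frac{1}{D}(LF'-DL'F)^2$ holds for all homogeneous $f$ regardless of hyperbolicity, exactly mirroring the paper's identity, and would let you avoid the inequality step entirely; but your case split ($L\ne0$: strict negativity from $L^2\mathrm{HC}_D(F)<0$; $L=0$: direct substitution gives $-D(L')^2F^2$, negative iff $F\ne0$, i.e.\ iff $\ell\nmid P$ by the Euler-identity translation) is correct as written. Your use of Euler is for a different purpose than the paper's—you use it to show $\ell\mid(\ell_x f_y-\ell_y f_x)\iff\ell\mid f$, whereas the paper uses it to simplify the Hessian expansion—and your ``only if'' argument via the rank-one Hessian of $\ell^2 g$ along $\ell=0$ is a valid independent observation. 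What the paper's approach buys is brevity and an exact algebraic identity in the same coordinates as the definition of hyperbolicity; what yours buys is coherence with the circle-criterion machinery already deployed elsewhere in the paper (Remark~\ref{cotas-ind-gama}, Lemma~\ref{igual-idices-ga-al}).
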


\noindent \begin{proof}
The hessian polynomial of the product $\,\ell f$ is  \medskip

{\small
{$\Hess$($\ell f$)($x,y$) $=  
\ell^2\Hess f  + 2\ell \ell_y (f_y f_{xx} - f_x f_{xy}) + 2\ell \ell_x (f_x f_{yy} - f_y 
f_{xy}) + 2\ell_x \ell_y f_x f_y - \ell_x^2 f_y^2 - \ell_y^2 f_x^2.$}}
\medskip

\noindent The well known {\bf Euler's formula} for a homogeneous polynomial of degree $D$ in $H[x,y]$ asserts
\begin{eqnarray}\label{formula-euler}
D f(x,y) = x f_x(x,y) + y f_y (x,y). 
\end{eqnarray}
Using it for the homogeneous 
polynomials $f_y$ and $f_x$, 
we get that the expression $f_y f_{xx} - f_x f_{xy}$ becomes
\begin{align*}
(f_y f_{xx} - f_x f_{xy})(x,y) &= \frac{1}{D-1}\left[ x f_{xy}f_{xx} + y f_{yy}f_{xx} - 
x f_{xx} f_{xy} - 
y f_{xy}^2 \right] \\
&= \frac{1}{D-1} y (\Hess f)(x,y),
\end{align*} 
and, in a similar way, $f_x f_{yy} - f_y f_{xy} = \frac{1}{D-1} x (\Hess f)(x,y).$ Replacing them in 
$\Hess (\ell f)$, we get
\medskip

\centerline{$\Hess$($\ell f$)($x,y$) $= \left(\frac{D+1}{D-1}\right) \ell^2 (\Hess f)(x,y) - (a f_y - b f_x)^2.$}
\medskip

\noindent Finally, $\Hess (\ell f)$ is negative in $\R^2\setminus\{(0,0)\}$ if and only if $\ell$ 
is not a factor of the polynomial $\ell_x f_y - \ell_y f_x$.
\end{proof}
\medskip

We now use Lemma \ref{lema-uno-mas-lineal} for the polynomials $g_{2n+2}$ and $\ell (x,y) = x$. 
Denote $G := g_{2n+2}$. We 
claim that $x$ is not a factor of the polynomial $(\ell_x G_y - \ell_y G_x)(x,y) = G_y (x,y) = 
(x^2-y^2)(2n y^{2n-1}) -2y (x^{2n} + y^{2n})$. Indeed, since $(G_y |_{y=1}) (0) = -2n-2 <0$,
we conclude that $x=0$ is not a root of $(G_y |_{y=1}) (x)$. 
Therefore, the polynomial $x g_{2n+2}(x,y)$, which is $f_{3}$, is hyperbolic.
\medskip

\item Inductive hypothesis. Let suppose now that $\ f_{2k+1} $ is hyperbolic.

\item We shall now verify that $\ f_{2k+3}(x,y) := x\, (x^{2n} + y^{2n}) 
\Pi_{i=1}^{k+1} (x^2 - i^2 y^2)$ is hyperbolic. The proof is divided into two steps: in the 
first one we shall demonstrate that $ (x - (k+1) y)f_{2k+1}(x,y)$ is 
hyperbolic; and in the second one we shall prove that  $ (x + (k+1) y) (x - (k+1) y)f_{2k+1}(x,y)$, 
which is $ f_{2k+3}(x,y)$, is hyperbolic.
\end{itemize}

\begin{lemma}\label{lema-pol-hyp-grado-par}
The even degree homogeneous polynomial $ (x - (k+1) y) f_{2k+1}(x,y)$ is hyperbolic 
provided that $ f_{2k+1}(x,y)$ is hyperbolic.
\end{lemma}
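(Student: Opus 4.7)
The plan is to invoke Lemma \ref{lema-uno-mas-lineal} with the linear form $\ell(x,y) = x - (k+1)y$ and $f = f_{2k+1}$. Since the inductive hypothesis gives that $f_{2k+1}$ is hyperbolic, it suffices to verify that $\ell$ is not a factor of $\ell_x f_y - \ell_y f_x = f_y + (k+1)\,f_x$. Equivalently, substituting $x=(k+1)y$ into this polynomial should not yield the zero polynomial in $y$.

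The key trick I would use is the chain rule: if $\varphi(y) := f_{2k+1}\bigl((k+1)y,\,y\bigr)$, then
\[
\varphi'(y) = (k+1)\,f_x\bigl((k+1)y,y\bigr) + f_y\bigl((k+1)y,y\bigr) = \bigl[f_y + (k+1)\,f_x\bigr]\bigl((k+1)y,y\bigr).
\]
So checking that $\ell$ is not a factor of $f_y + (k+1)f_x$ reduces to checking that $\varphi'(y) \not\equiv 0$.

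Next I would compute $\varphi(y)$ directly using the factored form of $f_{2k+1}$. Substituting $x=(k+1)y$ gives
\[
\varphi(y) = (k+1)y \cdot \bigl((k+1)^{2n}y^{2n} + y^{2n}\bigr)\cdot \prod_{i=1}^{k}\bigl((k+1)^2 y^2 - i^2 y^2\bigr) = C\,y^{2k+2n+1},
\]
where $C = (k+1)\bigl((k+1)^{2n}+1\bigr)\prod_{i=1}^{k}\bigl((k+1)^2 - i^2\bigr)$. Each factor of $C$ is nonzero; in particular $(k+1)^2 - i^2 > 0$ for $1 \le i \le k$. Hence $\varphi'(y) = C(2k+2n+1)\,y^{2k+2n}$ is a nonzero polynomial in $y$, so $\ell = x-(k+1)y$ does not divide $f_y + (k+1)f_x$.

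By Lemma \ref{lema-uno-mas-lineal} we conclude that $(x-(k+1)y)\,f_{2k+1}(x,y)$ is hyperbolic. I do not expect a real obstacle here: the argument is essentially a clean application of the inductive criterion, with the chain rule identity collapsing what would otherwise be a long expansion of partial derivatives into a one-line computation on an explicit monomial.
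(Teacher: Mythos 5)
Your proposal is correct, and it invokes the same key tool as the paper (Lemma~\ref{lema-uno-mas-lineal}), but it executes the divisibility check more efficiently. The paper verifies that $\ell = x-(k+1)y$ does not divide $\ell_x f_y - \ell_y f_x$ by expanding $f_x$ and $f_y$ from the factored form of $f_{2k+1}$, simplifying with Euler's formula $xg_x = 2kg - yg_y$, restricting to $y=1$, and then checking that the resulting one-variable polynomial is positive at $x=k+1$. Your observation that
\[
\bigl[\ell_x f_y - \ell_y f_x\bigr]\bigl((k+1)y,\,y\bigr)
= \bigl[f_y + (k+1)f_x\bigr]\bigl((k+1)y,\,y\bigr)
= \frac{d}{dy}\,f_{2k+1}\bigl((k+1)y,\,y\bigr)
\]
collapses that entire computation: since the restriction of $f_{2k+1}$ to the line $x=(k+1)y$ is the explicit monomial $C\,y^{2n+2k+1}$ with $C = (k+1)\bigl((k+1)^{2n}+1\bigr)\prod_{i=1}^{k}\bigl((k+1)^2-i^2\bigr) > 0$, its $y$-derivative is visibly a nonzero monomial. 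Both arguments verify the same non-vanishing condition (the paper in effect computes $\varphi'(1)$ by hand), so the proofs are equivalent in substance, but your chain-rule route avoids the partial-derivative expansion and the appeal to Euler's formula, and it makes the positivity of the relevant quantity immediate.
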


\noindent\begin{proof}
Let us denote $g(x,y) := \Pi_{i=1}^k (x^2 - i^2 y^2), \,$ $f := f_{2k+1}$  and 
$\ell (x,y) =  (x - (k+1) y)$.
In order to verify that $\ell f$ is hyperbolic, it is enough to prove, according to 
Lemma \ref{lema-uno-mas-lineal}, 
that $\ell$ is not a factor of  $\ell_x f_y - \ell_y f_x$.
Since $ f= x \left(x^{2n} + y^{2n}\right) g(x,y), $ we have that
\begin{eqnarray}\label{derivadas-f}
f_x (x,y) =  \left(x^{2n} + y^{2n}\right)\big( x g_x + g\big) +  2n g x^{2n}, 
\ \ \ 
f_y (x,y) =  \left(x^{2n} + y^{2n}\right)\big( x g_y\big) +  2n x g y^{2n-1}.
\end{eqnarray}
Replacing the expressions of (\ref{derivadas-f}) in $\ell_x f_y - \ell_y f_x$ we obtain
{\small
\begin{align*}
\ell_x f_y - \ell_y f_x &= \left(x^{2n} + y^{2n}\right)\big( x g_y \big) 
+ 2n x y^{2n-1} g + (k+1)
\left(x^{2n} + y^{2n}\right)\big( x g_x + g \big) + 2n  (k+1) g x^{2n}\\
&= \left(x^{2n} + y^{2n}\right)\Big( x g_y + (k+1) x g_x + (k+1) g \Big) 
+ 2 n g \big( x y^{2n-1} + (k+1)x^{2n}\big).
\end{align*}}
Using Euler's Lemma (see equation (\ref{formula-euler})) for the homogeneous 
polynomial $g$, we get the relation
$ x g_x  = 2k g - y g_y$ which is placed in $\ell_x f_y - \ell_y f_x $.
{\small
\begin{align*}
\ell_x f_y - \ell_y f_x &=
\left(x^{2n} + y^{2n}\right)\Big( x g_y + (k+1) 2k g - (k+1)y g_y + (k+1) g \Big) 
+ 2 n g \big( x y^{2n-1} + (k+1)x^{2n}\big)\\
&= \left(x^{2n} + y^{2n}\right)\Big( (k+1) (2k +1) g + \big(x - (k+1)y\big) g_y \Big) 
+ 2 n g \big( x y^{2n-1} + (k+1)x^{2n}\big).
\end{align*}}
To prove that $\ell$ is not a factor of  $\ell_x f_y - \ell_y f_x$ it will be enough 
to make sure that $x=k+1$ is not a root of the one-variable polynomial  
$F(x) := (\ell_x f_y - \ell_y f_x)|_{y=1}$. To carry it out, we shall
verify that $F$ is positive in $x= k+1$.
\begin{align*}
F(x) = &\left(x^{2n} + 1\right)\Big( (k+1) (2k +1) \Pi_{i=1}^k (x^2 - i^2) + 
\big(x - (k+1)\big) g_y|_{y=1}\Big) \\
&+  2n \big( x + (k+1)x^{2n}\big) \Pi_{i=1}^k (x^2 - i^2),
\end{align*}
Hence, 
\begin{align*}
F(k+1) =
&\big((k+1)^{2n} + 1\big)\Big( (k+1) (2k +1) \Pi_{i=1}^k \big((k+1)^2 - i^2\big) 
\Big) \\
&+  2n \big( k+1 + (k+1)^{2n+1}\big) \Pi_{i=1}^k ((k+1)^2 - i^2).
\end{align*}
Since $1\leq i \leq k$, then $i^2 < (k+1)^2$. So, the product $\,\Pi_{i=1}^k 
\big((k+1)^2 - i^2\big) $ is a 
positive number, and therefore the number $F(k+1) $ is positive.
\end{proof}
\medskip

We now shall prove that the polynomial $ f_{2k+3}$ is hyperbolic. Using the same 
notation of Lemma \ref{lema-pol-hyp-grado-par}, 
$g(x,y) := \Pi_{i=1}^k (x^2 - i^2 y^2), \,$ $f := f_{2k+1}$,\,
$\ell (x,y) =  x - (k+1) y$, and denoting $P = \ell f$ and $L(x,y) = x + (k+1)y$,
we have just to verify, according to Lemma \ref{lema-uno-mas-lineal},  
that $L$ is not a factor of  $L_x P_y - L_y P_x$. Since
$$P_x (x,y) = \ell f_x + f \ \mbox{ and }\ P_y (x,y) = \ell f_y - (k+1) f,$$
then
\begin{align*}
L_x P_y - L_y P_x &= (x - (k+1)y) f_y - (k+1) f  - (k+1) \Big( f + (x - (k+1)y)  
f_x\Big) \\
&= - 2 (k+1) f +  \Big(x - (k+1) y\Big) \Big( f_y - (k+1) f_x\Big).
\end{align*}
Let us analyze separately the addends of the last expression:
{\small 
\begin{align*}
 f_y - (k+1) f_x &=
\left(x^{2n} + y^{2n}\right)\big( x g_y \big) + 2n x y^{2n-1} g - (k+1)
\left(x^{2n} + y^{2n}\right)\big( x g_x + g \big) - 2n  (k+1) g x^{2n}\\
&= \left(x^{2n} + y^{2n}\right)\Big( x g_y - (k+1) x g_x - (k+1) g \Big) 
+ 2 n g \big( x y^{2n-1} - (k+1) x^{2n} \big).
\end{align*}}
Since $g$ is a $2k$-degree homogeneous polynomial, we can use the Lemma of Euler 
(see equation (\ref{formula-euler})) for it, obtaining the relation 
$  (k+1) x g_x  = 2k  (k+1) g -  (k+1) y g_y$. So,
{\small 
\begin{align*}
 f_y - (k+1) f_x &=
\left(x^{2n} + y^{2n}\right)\Big( x g_y - 2k (k+1) g + (k+1)y g_y - (k+1) g \Big) 
+ 2 n g \big( x y^{2n-1} - (k+1) x^{2n} \big)
 \\
&= \left(x^{2n} + y^{2n}\right)\Big(  \big(x + (k+1) y\big) g_y -(k+1) (2k +1) g  
\Big) + 2 n g \big( x y^{2n-1} - (k+1) x^{2n} \big).
\end{align*}}
Denote $R(x) := (L_x P_y - L_y P_x)|_{y=1}$. So, if $x=-(k+1)$ is not a root of $R$, 
then $L$ is not a factor 
of $L_x P_y - L_y P_x$. Let us note that
\begin{align*}
R(x) = & - 2 (k+1) x\, (x^{2n} + 1) g|_{y=1} +  \Big(x - (k+1)\Big) 
\Big(x^{2n} + 1\Big)\Big(  \big(x + (k+1)\big) g_y|_{y=1} \Big.\\
&\Big. -(k+1) (2k +1) g|_{y=1}  \Big) + 2 n \Big(x - (k+1)\Big) 
\big( x - (k+1) x^{2n} \big) g|_{y=1}.
\end{align*}
Therefore,
\begin{align*}
R(-(k+1)) & = (k+1)^{2} \Big((k+1)^{2n} + 1\Big) \Pi_{i=1}^k \big((k+1)^2 - i^2\big)\\
&+ \Big( -2(k+1)\Big)\Big((k+1)^{2n} + 1\Big)\Big( -(k+1) (2k +1) 
\Pi_{i=1}^k \big((k+1)^2 - i^2\big) \Big)\\
&+  2 n \Big( - 2(k+1)\Big) 
\big( - (k+1) - (k+1)^{2n+1} \big)\Pi_{i=1}^k \big((k+1)^2 - i^2\big).
\end{align*}
Since $i^2 < (k+1)^2$ for each $1\leq i \leq k$, each addend of $R(-(k+1))$ is positive.
We conclude that $R(-(k+1))$ is positive. \medskip

\noindent {\bf Case $iii$)} It was included in the third step of case $ii$).
\end{proof}
\bigskip

\subsection{Some properties of the graph of the hyperbolic polynomials of 
Remark \ref{pol-fact} and Theorem \ref{teo-pol-hiper-impares}}

\begin{itemize}
\item  Let $f$ be a hyperbolic polynomial in the family mentioned above, that is, $f = E R,$ where $R$ is the product of non-proportional linear homogeneous polynomials and $E$ denotes some polynomial $E_{2n}$. The function $F$, which is the 
restriction of $f$ to the unit circle, $F ( \varphi ) = f (cos  \,\varphi, sin \, \varphi )$, satisfies, according to 
Remark \ref{obs-arnold}, that between each 
two of its consecutive zeros there is only one critical point. Moreover, according to inequality 
(\ref{hyper-cond}), its critical points are not degenerate. By the homogeneity of $f$, the same 
behavior occurs on circles of radius $r$. In Figure 1, we show the graph of 
$f(x,y) = (x^2-y^2)(x^2- 4y^2)(x^4 +y^4)$, and in Figure 2 we show the graph of $F$ 
on the interval $[0,\pi]$. Both graphs were done with the Maple software.
\medskip

\begin{figure}[ht] 
\begin{center}
 \begin{minipage}{0.5\textwidth}
\qquad\includegraphics[width=6.7 cm, height=5.1 cm]{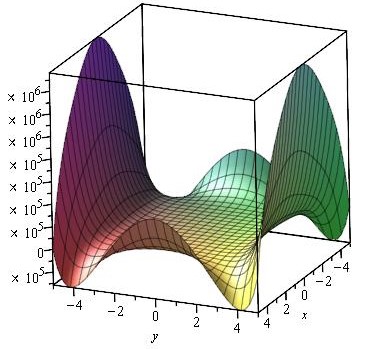}
\caption{Graph of the polynomial \\ $f(x,y) = (x^2-y^2)(x^2- 4y^2)(x^4 +y^4)$.}
\end{minipage}
\quad
\begin{minipage}{0.4\textwidth}
 \includegraphics[width=6.5 cm, height=5 cm]{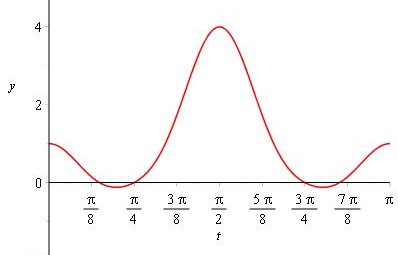}
\caption{Graph of $F$ on $[0,\pi]$.}
\qquad
\end{minipage} 
\end{center}
\end{figure}

\item In this item, we shall prove the following result.
\begin{proposition}
The foliations of the asymptotic curves of the surface $z = f (x, y)$, and those of the surface $z = R (x, y)$, 
are topologically equivalent, where $f (x, y) = E(x,y) R(x,y),\,$ $E(x,y)=x^{2m}+y^{2m}$, and $R(x,y)$ is  
the product of non-proportional real linear homogeneous factors (i.e., $R$ is of the form of one of those in Remark \ref{pol-fact}). 
\end{proposition}

To do this,
we will study the fields of asymptotic directions of (the surfaces determined by) the polynomials.
In \cite{O-Sa}, the authors prove in Theorem 1.1 that the second fundamental form of the surface 
$z-f_m (x,y)=0,$ where $f_m$ is a homogeneous polynomial of degree $D$ in the Arnold
family (\ref{polis-arnold}), is hyperbolic isotopic to the second fundamental form of the surface
$z = \mbox{Re} (x+i y)^m$ if $m\geq 2, m\leq D\leq m^2$, and $D-m$ is even. In this case, 
a quadratic form on 
$\mathbb R^{2^*}$ is called {\it smooth hyperbolic} if its discriminant is positive
at each point on $\mathbb R^{2^*} = \R^ 2\setminus \{(0,0)\}$, and is extended 
to the origin with a singularity. We recall that {\it the discriminant of a quadratic form $ a dx^2 + 2 b dx dy + c dy^2$} is $\, b^2- ac.$
And a {\it hyperbolic isotopy} between two smooth hyperbolic quadratic forms $\omega$ and 
$\delta$ on $\mathbb R^{2^*}$ is a smooth map
$$\Psi: \mathbb R^{2^*} \times [0,1]  \rightarrow {\cal Q},\ \ \ (x,y,t) \mapsto \Psi_t(x,y),$$
\noindent where ${\cal Q}$ is the space of real quadratic forms on the plane and the following conditions hold:
$\Psi_0(x,y)= \omega(x,y)$, $\Psi_1(x,y)= \delta(x,y)$ and $\Psi_t(x,y)$ 
is a smooth hyperbolic quadratic form on 
$\mathbb R^{2^*}$, which extends at the origin with a singularity. In such a case, it is said that 
{\it $\omega$ and $\delta$ are hyperbolic isotopic}. If the second fundamental forms of two 
hyperbolic homogeneous polynomials are hyperbolic isotopic, then the indices of their fields 
of asymptotic lines at the origin coincide. Moreover, 
the asymptotic fields are locally topologically 
equivalent; this means that the number of separatrices of both fields are the same, and the corresponding
sectors are of the same type (parabolic, hyperbolic or ellyptic).

\medskip

We remark that the arguments used in the proof of Theorem 1.1 of
\cite{O-Sa} are also valid for the hyperbolic polynomials of Theorem \ref{teo-pol-hiper-impares}.
For completeness, we give a  proof of this statement. In what follows consider the notation 
$Q(x,y):=x^{2n}+y^{2n}$, and denote by $P(x,y)$ 
the product of $k$ non-proportional linear homogeneous polynomials. Then, the second fundamental form of 
the polynomial $PQ$ of degree $k+2n$ is 
$$\II_{PQ} = P\II_Q + 2dP dQ + Q\II_P,$$
where $dP= P_x dx + P_y dy$, $dQ=Q_xdx + Q_y dy$ and $dPdQ$ is the quadratic form defined 
by the product of $dP$ and $dQ$. \medskip

\noindent 
$i$) We claim that the quadratic form $\omega :=  2dP dQ + Q\II_P$ is a smooth
hyperbolic quadratic form on $\mathbb R^{2^*}$.  Indeed,
after some computations, we have that the discriminant of $\omega$ is
$$\Delta_\omega = -Q^2 \Hess P + (P_x Q_y -P_y Q_x)^2 - 2 Q T,$$
where $T := P_{xx}P_y Q_y + P_{yy} P_x Q_x -P_{xy}(P_x Q_y + P_y Q_x).$
In accordance with Remark 3.2 of \cite{O-Sa}, the polynomial $T$ becomes, after using 
Euler's Lemma, the expression $ \left(\frac{2n}{k-1}\right) Q \Hess P$, which is 
negative in $\mathbb R^{2^*}$. Thus, the 
discriminant $\Delta_\omega$ is positive in $\mathbb R^{2^*}$.\medskip

\noindent 
$ii$) Consider the quadratic form $\delta := P II_Q$. Then, the hyperbolic quadratic forms 
$\omega$ and $\omega + \delta = II_f$ are hyperbolic isotopic. In order to prove this relation, 
we denote $\omega (x,y) = \omega_1 dx^2
+ 2\omega_2 dx dy + \omega_3 dy^2,$  $\, \delta (x,y) = \delta_1 dx^2 + 2\delta_2 dx dy + 
\delta_3 dy^2,$ and consider  the isotopy:
$$ \Phi_t (x,y) = \omega(x,y) + t \delta(x,y), \mbox{ with } t\in [0,1]$$ 
whose discriminant is:
\[\Delta_{\Phi_t} = \omega_2^2 - \omega_1 \omega_3 + t \, (2\omega_2 \delta_2 - \omega_1\delta_3- \omega_3 \delta_1) + t^2 \, ( \delta_2^2 - \delta_1 \delta_3).\]
We know that $\,\omega_2^2 - \omega_1 \omega_3 >0$  in $\mathbb R^{2^*}$ because of $\omega$ 
is hyperbolic; the term $\,\omega_2^2 - \omega_1 \omega_3 + (2\omega_2 \delta_2- \omega_1\delta_3- \omega_3 \delta_1) + ( \delta_2^2 - \delta_1 \delta_3)$ is positive in $\mathbb R^{2^*}$  because of 
$\omega + \delta$ is hyperbolic, and $\Delta_\delta = \delta_2^2 - \delta_1 \delta_3 = - P^2 \Hess Q \leq 0$ in $\mathbb R^{2^*}$ since $(\Hess Q)(x,y) = 4 n^2 (2n-1)^2 x^{2n-2} y^{2n-2} \geq 0$.
From all these inequalities, we conclude that the discriminant $\Delta_{\Phi_t}$
is positive in $\mathbb R^{2^*}$ for any $t\in [0,1]$ because for each $(x,y)$ in $\mathbb R^{2^*},$ 
it is a polynomial $q(t) = c +bt + at^2$ such that $q(0) >0, \, q(1) >0$ and $a\leq 0$.
\medskip

\noindent 
$iii$) We assert that the forms $\omega$ and $Q\II_P$ are hyperbolic isotopic. The isotopy that  fulfills
this property is
$$\Psi_t (x,y)= Q II_p + 2t dP dQ, \mbox{ with } \,t \in [0,1]$$
because its discriminant 
\[ \Delta_{\Psi_t} = - Q^2 \Hess P  - 2t Q T + 4 t^2 \Delta_{dP dQ}\]
is positive in $\mathbb R^{2^*}$ for any $t\in [0,1]$ because $-T >0$ and $\Delta_{dP dQ} = 
\frac{1}{4}(P_x Q_y - P_y Q_x)^2 \geq 0$  in $\mathbb R^{2^*}$.
\medskip

\noindent 
$iv$) The quadratic forms $Q\II_P$ and $\II_P$ are hyperbolic isotopic. 
Indeed, the isotopy that carries it out is $\Gamma_t (x,y):= t\II_P + (1-t) Q\II_P$ with $\,t \in [0,1]$
since its discriminant $\,\Delta_{\Gamma_t} = -M^2 \Hess P$, with $M = t + (1-t)Q$, is positive in
$\mathbb R^{2^*}$.
\medskip

\noindent Finally, the fields of asympotic lines corresponding to the polynomials $f = QR$ and $R$ are topologically equivalent. $\scalebox{0.8}{$\blacksquare$}$
\medskip

\item In this item, we shall describe the topological behavior of the asymptotic curves of a
homogeneous polynomial $P$ that is the product of $k$ non-proportional real linear homogeneous 
polynomials. Namely,
$$P(x,y)= \Pi_{i=1}^k \ell_i (x,y) \ \mbox{ where } \ \ell_i (x,y) := a_ix+b_iy, \ i\in \{1, \ldots ,k\}.$$

\begin{proposition}\label{prop-sectores-hip}
The second fundamental form of the surface $S \subset \mathbb{R}^3$ defined by the equation $z=P(x,y)$ defines globally two fields of asymptotic directions on the $xy$-plane. Moreover, each of them satisfies the following properties:
\begin{enumerate}
\item[$i$)]   Has only one singular point, which is the origin.
\item[$ii$)]  There are $k$ rays that are asymptotic lines, all of them, emanating from the origin.
\item[$iii$)]  Each sector between two adjacent rays is hyperbolic.
\end{enumerate}
\end{proposition}

In Figure \ref{pol-cubico} and Figure \ref{pol-cuartico}, we exhibit respectively,  
the asymptotic curves of a field of asymptotic 
directions for the polynomials $f(x,y) = x (x^2-y^2)$ and $f(x,y)=xy(x^2-y^2)$.
\begin{figure}[ht] 
\begin{center} 
\begin{minipage}{0.45\textwidth}
\qquad\qquad \includegraphics[width=3.6 cm, height=3.0 cm]{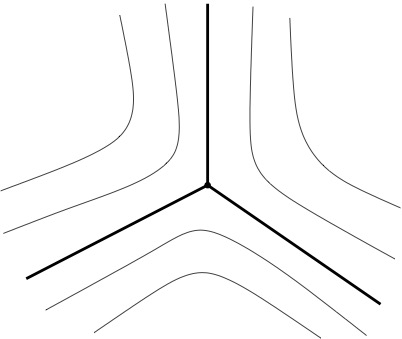}
\caption{Foliation of a Field of Asymptotic Directions for $f(x,y)=x(x^2-y^2)$.}
\label{pol-cubico} 
\end{minipage}\qquad
\begin{minipage}{0.45\textwidth}
\qquad\qquad \includegraphics[width=3.6 cm, height=3.0 cm]{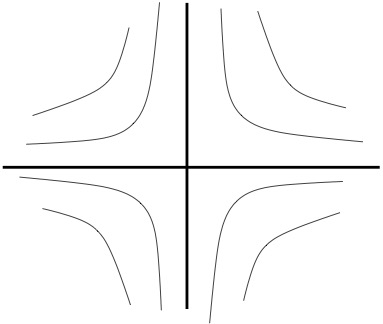}
\caption{Foliation of a Field of Asymptotic Directions for $f(x,y)=xy(x^2-y^2)$.}
\label{pol-cuartico}
\end{minipage} 
\end{center}
\end{figure}

\noindent{\bf Proof.}
\begin{itemize}
\item[$i$)] Since the surface $S: \ z-P(x,y)=0$ is orientable, its second fundamental form defines globally two fields of asymptotic directions on the $xy$ plane. When the degree of $P$ is greater than 2, both fields have a singular point at the origin, and it is the only one since the other points are hyperbolic.

\item[$ii$)] Consider the half-lines obtained by removing the origin from the straight lines 
$\ell_i (x,y) = 0, \ i\in \{1, \ldots ,k\}.$ We claim that each of these half-lines is an integral curve of one of the fields of asymptotic lines. In order to prove this assertion, consider a point $q$ in the line 
$\ell_i (x,y) = 0$ and let $v$ a vector tangent to this line at $q$. We shall verify that the vector $v$ lies on 
the zero locus of the fundamental form $\II_P (q).$ 
Note that the point $q$ has coordinates $\lambda (\bar x, \bar y)$, where $\,\bar x = -b_i, \, \bar y = a_i,$ 
and $\,\lambda \neq 0$; and let us take as the vector $v$, the vector $(\bar x, \bar y)$. By evaluating the form $\II_P (q)$ on vector $v$, 
and due to the homogeneity property of $P$, we obtain that $\II_P (q) (v) = 
\lambda^{k-2} \omega$, where
\[\omega = {\bar x}^2 P_{xx}(\bar x, \bar y) + 2 {\bar x}{\bar y} P_{xy}(\bar x, \bar y) + 
{\bar y}^2 P_{yy}(\bar x, \bar y).\]
Using Euler's Formula (\ref{formula-euler}) for the polynomials $P, \ P_x$ and $P_y$, we infer that 
\[k P (\bar x, \bar y) = \bar x \left( \frac{1}{k-1} \Big(\bar x P_{xx}  (\bar x, \bar y) 
+ \bar y P_{xy} (\bar x, \bar y)\Big)\right) + \bar y \left( \frac{1}{k-1} \Big(\bar x P_{xy}  (\bar x, \bar y) 
+ \bar y P_{yy} (\bar x, \bar y)\Big)\right).\]
This implies that $\omega =  k (k-1) P (\bar x, \bar y)$. Since $P (\bar x, \bar y) = 0$, we conclude that the quadratic form $\II_P (q)$ vanishes at the vector $v$.
\medskip

We claim that two consecutive half-lines are integral curves of different fields
of asymptotic lines. In order to prove this, consider appropriate real numbers $\epsilon, c_1, \cdots ,c_k$  
such that the non-homogeneous polynomial 
$$E(x,y) = \Pi_{i=1}^k E_i (x,y)  \mbox{ \ where \ } 
E_i (x,y) := c_i \epsilon + a_i x+ b_i y,   \  i \in \{1, \ldots ,k\},$$  
is a {\it factorizable polynomial} (see \cite[Definition 2]{AOR}), that is, the straight lines determined by the equation $E (x,y) = 0$ 
are in generic position. Let us denote such straight lines respectively as $L_1, \ldots ,L_k$.
\medskip 

In \cite[Theorem 1]{AOR}, the author studies the geometrical behavior of these kinds of 
polynomials: each straight line $L_i$, $i \in \{1, \ldots ,k\}$ contains exactly $k-2$ special parabolic points
(points at which their only asymptotic direction is tangent to the smooth parabolic curve), which divide $L_i$ into $k-1$ open segments. Each of these segments is an integral curve of one of the fields of asymptotic lines. Furthermore, any two adjacent segments belong to different line fields.
So, if the degree of $E$ is even, the two unbounded segments of $L_i$ are integral curves of the same direction field whereas if the degree is odd, the unbounded segments belong to different direction fields. 
The position of the $k-2$ special parabolic points on $L_i$ is as follows. The line $L_i$ is divided by the other 
$k-1$ lines into $k$ segments of which $k-2$ are bounded. In each open bounded segment there is exactly one special parabolic point.
\medskip

On the other hand, the lines $L_1, \ldots, L_k$ determine several compact polygons on the $xy$-plane. 
Let $M$ denote the union of all these compact polygons. Thus, all the special parabolic points are in $M$.
By letting $\epsilon$ now tend to zero, the polynomial $E(x,y)$ tends to the polynomial $P(x,y)$, and by continuity, the compact polygon $M$ tends to the origin (because all intersection points $L_i \cap L_j$ 
tend to the origin). Thus, any two consecutive half-lines of $P(x,y)=0$ are integral curves of different fields
of asymptotic lines. Moreover, if the degree of $P$ is even the two half-lines of $L_i$ are integral curves of 
the same field, whereas if the degree of $P$ is odd, they are in different direction fields. 
\medskip

\item[$iii$)] In this item, we shall prove that each open sector on the $xy$-plane bounded by two adjacent half-lines (defined in $ii$)) is of a hyperbolic type.
In \cite{GuOr}, the authors study the extension to the real projective plane of the fields 
of asymptotic lines of a real polynomial $f$ (not necessarily homogeneous) in two variables. 
They prove that 
($a$) there are two smooth fields of directions over the unit sphere such that if we restrict them to an open hemisphere, we obtain two fields of lines diffeomorphic to the fields of asymptotic directions. In this case, 
the equator plays the role of the line at infinity. These fields over the sphere are called extended direction fields;
($b$) the singular points on the equator of the extended direction fields (called singular points at infinity)
are determined by the leading homogeneous part of the polynomial of $f$;
($c$) the equator without the singular points at infinity is an integral curve of both extended direction fields;
($d$) the Poincar\'e index of each singular point at infinity is $\frac{1}{2}$, and
($e$) the topological type of such a singular point at infinity is like the type of a Monstar, 
which is shown in Figure \ref{sing-pt-infinity}. In Figure \ref{sing-pts-esfera}, the behavior of a field of 
asymptotic directions in a neighborhood 
of a singular point at infinity is exhibited according to the parity of the degree of $f$ \cite[Remark 4.10]{GuOr}.

\begin{figure}[ht] 
\hskip 2cm 
\begin{minipage}{0.4\textwidth}
\quad \includegraphics[width=4.3 cm, height=3.0 cm]{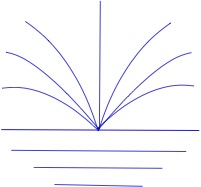}
\caption{Topological type of a \\ singular point at infinity}
\label{sing-pt-infinity} 
\end{minipage}
\quad
\begin{minipage}{0.5\textwidth}
 \includegraphics[width=6.9 cm, height=3.5 cm]{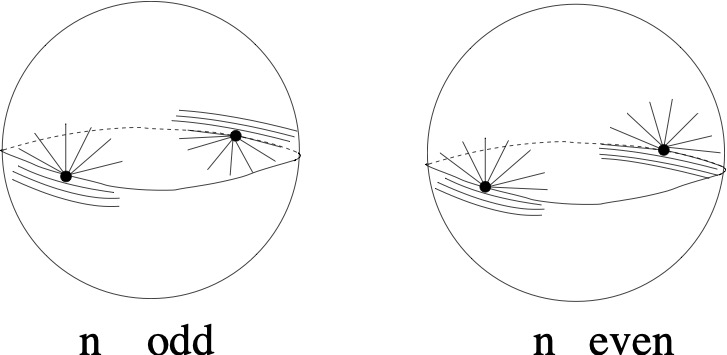}
\caption{Behavior of the asymptotic \\ curves at a singular point at infinity}
\label{sing-pts-esfera} 
\end{minipage} 
\end{figure}

Using all these results for our factorizable polynomial $P$ of degree $k$, we conclude that the extended
direction fields have $2k$ singular points  on the equator (each linear factor of $P$ determines exactly
two points, $q$ and $-q$). In fact, each extended direction field has exactly $k$ singular points at infinity 
(on the equator). Let $F_1$ and $ F_2$ denote the extended direction fields. According to item ($e$), 
we have 
that if $q$ is a singular point at infinity of $F_1$, then $-q$ is a singular point at infinity of $F_1$ or $F_2$ 
whenever $k$ is, respectively, even or odd. Let us now restrict one of the extended direction fields to the 
closed northern hemisphere (the dynamic over the closed southern hemisphere is similar). 
This restricted line field 
satisfies the following properties: it has the north pole as its only singular point outside the equator, it has $k$ singular points at infinity, and $k$ separatrices that go from the north pole to the $k$ singular points at infinity. Thus, the closed northern hemisphere is divided by the $k$ separatrices into $k$ sectors, each bounded by two
separatrices and a segment of the equator. Let $\Sigma$ denote one of these sectors, and $A$ and $B$ the two singular points at infinity that lie in this sector.
According to the local topological behavior of a singular point at infinity, shown in Figure $\ref{sing-pts-esfera}$, 
and since there are no singular points inside $\Sigma$, all integral curves contained inside $\Sigma$ begin at the singular point $A$ and end at the singular point $B$. This shows that the interior of $\Sigma$ is a hyperbolic sector.
\medskip

All this analysis allows us to show in Figure \ref{pol-cubico-ext} and Figure \ref{pol-cuartico-ext}, respectively, the 
topological behavior of asymptotic curves of one of the extended asymptotic direction fields for a cubic and 
a quartic polynomial. The topological behavior of asymptotic curves for a polynomial of higher degree
is similar.

\begin{figure}[ht] 
\begin{minipage}{0.46\textwidth}
\qquad\qquad \includegraphics[width=3.4 cm, height=3.0 cm]{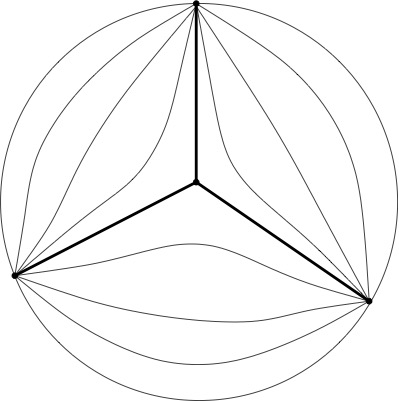}
\caption{Foliation of an Extended Field of Asymptotic Directions for $x(x^2-y^2)$.}
\label{pol-cubico-ext} 
\end{minipage}\qquad
\begin{minipage}{0.46\textwidth}
\qquad\qquad  \includegraphics[width=3.4 cm, height=3.0 cm]{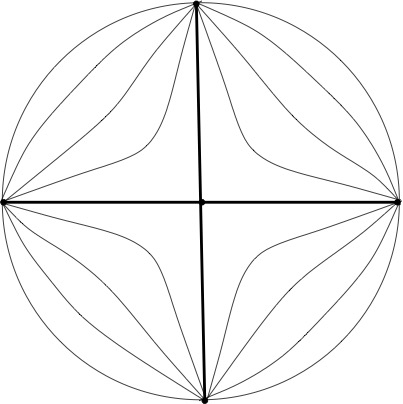}
\caption{Foliation of an Extended Field of Asymptotic Directions for $xy(x^2-y^2)$.}
\label{pol-cuartico-ext}
\end{minipage} 
\end{figure}
\end{itemize}
\end{itemize}
\vskip 0.5cm

\end{document}